% ----------------------------------------------------------------
% AMS-LaTeX Paper ************************************************
% **** -----------------------------------------------------------
\documentclass{amsart}
\usepackage{graphicx}
\usepackage{amsmath,amssymb}
\usepackage{amsthm}
\usepackage{color}

% ----------------------------------------------------------------
\vfuzz2pt % Don't report over-full v-boxes if over-edge is small
\hfuzz2pt % Don't report over-full h-boxes if over-edge is small

% THEOREMS -------------------------------------------------------
\newtheorem{thm}{Theorem}[section]
\newtheorem{cor}[thm]{Corollary}
\newtheorem{lem}[thm]{Lemma}
\newtheorem{prop}[thm]{Proposition}

\theoremstyle{definition}
\newtheorem{defn}[thm]{Definition}
\theoremstyle{remark}
\newtheorem{rem}[thm]{Remark}
\theoremstyle{example}

\theoremstyle{conjecture}

\numberwithin{equation}{section}

% MATH -----------------------------------------------------------

\newcommand{\B}{{\mathbb B}}

\newcommand{\R}{{\mathbb R}}
\newcommand{\SSS}{{\mathbb S}}
\newcommand{\C}{{\mathbb C}}

\newcommand{\N}{{\mathbb N}}

\newcommand{\Z}{{\mathbb Z}}

\newcommand{\im}{{\rm Im}\,}

\newcommand{\calB}{{\mathcal B}}

\newcommand{\calD}{{\mathcal D}}

\newcommand{\calM}{{\mathcal M }}

\newcommand{\calT}{{\mathcal T}}

\newcommand{\calW}{{\mathcal W}}

\newcommand{\rpm}{\raisebox{.2ex}{$\scriptstyle\pm$}}

\def\one{\mbox{1\hspace{-4.25pt}\fontsize{12}{14.4}\selectfont\textrm{1}}}

\makeatletter
\newcommand{\doublewidetilde}[1]{{%
  \mathpalette\double@widetilde{#1}%
}}
\newcommand{\double@widetilde}[2]{%
  \sbox\z@{$\m@th#1\widetilde{#2}$}%
  \ht\z@=.9\ht\z@
  \widetilde{\box\z@}%
}
\makeatother

% ----------------------------------------------------------------

\begin{document}

\title[Sparse Domination of Composition Operators]{Sparse domination of weighted composition operators on weighted Bergman spaces}%

\author{Bingyang Hu, Songxiao Li$^\dag$, Yecheng Shi, and Brett D. Wick}

\date{\today}%

\address{Bingyang Hu: Department of Mathematics, University of Wisconsin, Madison, WI 53706-1388, USA.}%
\email{bhu32@wisc.edu}%%

\address{Songxiao Li:  Institute of Fundamental and Frontier Sciences, University of Electronic Science and Technology of China, 610054, Chengdu, Sichuan, P.R. China.}
\email{jyulsx@163.com}%%

\address{Yecheng Shi:  School of Mathematics and Statistics, Lingnan Normal University, Zhanjiang 524048, Guangdong P.R. China.}
\email{09ycshi@sina.cn}%%

\address{Brett D. Wick: Department of Mathematics and Statistics, Washington University in Saint Louis, Saint Louis, MO 63130-4899, USA.}
\email{wick@math.wustl.edu}

\subjclass[2010]{47B33, 42A99, 30H20}%

\keywords{Sparse domination, weighted Bergman spaces, weighted composition operators, weighted estimates}%

\thanks{$\dag$ Corresponding author.}

% ----------------------------------------------------------------

\maketitle

% ----------------------------------------------------------------

\begin{abstract}
The purpose of this paper is to study sparse domination estimates of composition operators in the setting of complex function theory. The method originates from proofs of the $A_2$ theorem for Calder\'on-Zygmund operators in harmonic analysis. Using this tool from harmonic analysis,  some new characterizations are given for  the boundedness and compactness of weighted composition operators acting between weighted Bergman spaces in the upper half plane. Moreover, we establish a new weighted type estimate for the holomorphic Bergman-class functions, for a new class of weights, which is adapted to Sawyer--testing conditions. We also extend our results to the unit ball $\B$ in $\C^n$.
\end{abstract}

% ----------------------------------------------------------------
\bigskip

\section{Introduction}

Let $\R^2_+:=\left\{z \in \C, \im z>0\right\}$ be the upper half-plane on the complex plane, $\widehat{\R^2_+}:=\R^2_+ \cup \{\infty\}$ and $H(\R^2_+)$ be the set of all holomorphic functions in $\R^2_+$ with the usual compact open topology. For $0<p<\infty$ and $\alpha>-1$, let $L^p_\alpha=L^p_\alpha(\R^2_+)$ be the collection of measurable functions $f$ in $\R^2_+$, for which the (quasi-) norm
\begin{equation} \label{norm01}
\|f\|_{p, \alpha}:= \left( \int_{\R^2_+} |f(z)|^pdA_\alpha(z) \right)^{\frac{1}{p}}
\end{equation}
is finite, where $dA_\alpha(z)=\frac{1}{\pi} (\alpha+1) (2\im z)^\alpha dA(z)$, $dA(z)=dxdy$, and $z=x+iy$.

The \emph{weighted Bergman space} $A_\alpha^p$ on $\R^2_+$ is defined to be the space $L_\alpha^p \cap H(\R^2_+)$. It is well known that when $1 \le p<\infty$, $A_\alpha^p$ is  a Banach space with the norm \eqref{norm01}; while for $p \in (0, 1)$, it is a Fr\'echlet space with the translation invariant metric
$$
d(f, g):=\|f-g\|^p_{p, \alpha}, \quad f, g \in A_\alpha^p.
$$
    We refer the interested reader to the books \cite{HKZ, Zhu} for more information about weighted Bergman spaces on the  unit disk and the unit ball.

Let $u \in H(\R^2_+)$ and $\varphi: \R^2_+ \rightarrow \R^2_+$ be a holomorphic  self-mapping. The \emph{weighted composition operator} is defined as
$$
W_{u, \varphi}(f)(z)=u(z) \cdot f \circ \varphi(z), \quad f \in H(\R^2_+), z \in \R^2_+.
$$
If $u(z) \equiv 1$, then $W_{u, \varphi}$ becomes the \emph{composition operator} and is denoted by $C_\varphi$, and if $\varphi(z)=z$, then $W_{u, \varphi}$ becomes the \emph{multiplication operator} and is denoted by $M_u$. See, for example, \cite{cz1, LS,SM} for more information about composition operators and weighted composition operators on weighted Bergman spaces on the unit disk.

In the recent decade, the sparse domination technique was developed and studied by many mathematicians  working in harmonic analysis. This technique dates back to Andrei Lerner from his alternative, simple proof of the $A_2$ theorem \cite{AL1, AL2}, proved originally by Hyt\"onen \cite{TH}. In Lerner's work, he was able to bound all Calder\'on-Zygmund operators by a supremum of a special collection of dyadic, positive operators called \emph{sparse operators}.  This estimate led almost instantly to a proof of the sharp dependence of the constant in related weighted norm inequalities, the $A_2$ theorem, a problem that had been actively worked on for over a decade.

Later, there have been many improvements to Lerner's techniques, as well as extending his ideas to a wide range of spaces and operators, such as \cite{CR, CPO, LM, AN}. In general, sparse bounds have been recognized as a finer quantification of the boundedness of an operator, which roughly says that the behavior of an operator can be captured by a ``sparse" collection of dyadic cubes.

Sparse bounds of operators acting between complex function spaces is a recent research topic. As far as we know, this type of estimates first appears in the work of Aleman, Pott and Reguera \cite{APR}, where they proved a pointwise sparse domination estimate of the Bergman projection to study the Sarason conjecture on the Bergman spaces.  Later, by using similar ideas, Rahm, Tchoundja and the last author \cite{RTW} were able to establish some weighted estimates for the Berezin transforms and Bergman projections acting between weighted Bergman spaces on the unit ball (see, also \cite{HW} for its analog in Hartogs domains).

The aim of this paper is to study the sparse domination estimate of the weighted composition operators acting on complex function spaces. The novelty are twofold.

\begin{enumerate}
\item [$(a)$ ] From the viewpoint of harmonic analysis, the weighted composition operators that we study, lack an integral structure and aren't immediately amenable to study via a dyadic structure. This is very different from the case of studying sparse bounds of the Hardy-Littlewood maximal operators, Calder\'on-Zygumund operators, Haar shift operators or other operators that have been considered in harmonic analysis. We will overcome this difficulty by applying integral representations of holomorphic Bergman-class functions and introduce some proper positive sparse forms which are adapted to the Carleson measure induced by  weighted composition operators (see, \eqref{010101}). Moreover, we are also able to describe the  compactness of weighted composition operators by using sparse domination. To the best of our knowledge, no prior results on describing the compactness of operators by using sparse domination exist in the literature. \\

\item [$(b)$] From the view of complex function theory and weight theory, we discover new criteria of describing the boundedness and compactness of weighted composition operators acting on weighted Bergman spaces. Moreover, we are able to establish some new weighted type estimates for a new class of weights, which is adapted to Sawyer--testing conditions (see, Definition \ref{newweight} and Remark \ref{newweightrem}). Again, to the best of our knowledge, these types of results appear to be new in the literature, and more importantly, they seem not be covered by the classical Carleson measure technique.
\end{enumerate}

The structure of this paper is as follows. Section 2 provides backgrounds, especially the dyadic system and sparse family in $(\R_+^2, dA_\alpha)$, and Section 3 characterizes a standard Carleson embedding type theorem. In Section 4, we first give new necessary and sufficient conditions for the weighted composition operators to be bounded and compact on the weighted Bergman spaces. Moreover, we establish a new weighted type estimate, together with introducing a new class of weighs that is adapted to Sawyer's classical test conditions. In Section 5, we deal with the analog of our results in the unit ball $\B$ in $\C^n$, and finally, in Section 6, we give some remarks  for possible extensions of our main results.

Throughout this paper, for $a, b \in \R$, $a \lesssim b$ ($a \gtrsim b$, respectively) means there exists a positive number $C$, which is independent of $a$ and $b$, such that $a \leq Cb$ ($ a \geq Cb$, respectively). Moreover, if both $a \lesssim b$ and $a \gtrsim b$ hold, then we say $a \simeq b$.

% ----------------------------------------------------------------
\bigskip

\section{Preliminary}

In this section, we recall some basic facts from the dyadic calculus on $(\R^2_+, dA_\alpha)$. For $a=x_a+iy_a \in \R^2_+$, we denote
$$
T_{a}:=\left\{ z=x+iy \in \R^2_+: |x-x_a| \le \frac{y_a}{2}, 0<y<y_a \right\}
$$
to be the \emph{Carleson tent} associated to $a$. While for an interval $I \subset \R$, we denote
$$
Q_I:=\left\{ z=x+iy \in \R^2_+: x \in I, y<|I| \right\}
$$
to be the \emph{Carleson box} associated to $I$. We note that
$$
Q_I=T_{(c_I, |I|)},
$$
where $c_I$ is the center of $I$. For any $E \subseteq \R^2_+$, denote $A_\alpha(E):=\int_E dA_\alpha(z)$. Then it is easy to see that
$$
A_\alpha(T_a) \simeq y_a^{\alpha+2} \simeq (y_z+y_a)^{\alpha+2} \simeq |z-\bar{a}|^{\alpha+2}, \quad z \in T_a.
$$

It will be convenient for us to decompose $Q_I$ into a disjoint union of small rectangles. To do this, we introduce the following definition.

\begin{defn} \label{Whitneydecomp}
Let $I=[a, b) \subset \R$ and $Q_I$ be the Carleson box associated to $I$. For each $i \ge1, i \in \N$, we define the \emph{$i$-th generation of the upper Whitney rectangles associated to $I$} as
$$
\calW_{i, I}:=\left\{ \bigg[a+\frac{(b-a)(j-1)}{2^{i-1}}, a+\frac{(b-a)j}{2^{i-1}} \bigg) \times \bigg[\frac{b-a}{2^i}, \frac{b-a}{2^{i-1}} \bigg),  1 \le j \le 2^{i-1}  \right\}
$$
and the \emph{collection of upper Whiteney rectangles associated to $I$} as
$$
\calW_I:=\bigcup_{i \ge 1}^\infty \calW_{i, I}.
$$
In particular, there is only one rectangle in $\calW_{1, I}$, which is denoted as $Q_I^{\textrm{up}}$. Moreover,
$$
Q_I=\bigcup_{R \in \calW_I} R
$$
\end{defn}

We have the following lemma, which is an easy application of the mean value property of subharmonic function.

\begin{lem} \label{subharmonic}
Let $I=[a, b)$ and $Q_I$ be defined as above. Let further, $R \in \calW_{i, I}$ for some $i \ge 1$. Then for any $f \in H(\R^2_+)$,
$$
|f(z)| \lesssim \frac{1}{A_\alpha(R)} \int_{\frac{3R}{2}} |f(z)|dA_\alpha(z), \quad z \in R,
$$
where the implicit constant in the above inequality only depends on $\alpha$, and $\frac{3R}{2}$ is the dilation of $R$ with same center but with side lengths $3/2$ times of $R$.
\end{lem}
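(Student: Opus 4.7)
The plan is to invoke the sub-mean-value property of $|f|$, which is subharmonic because $f$ is holomorphic, and then absorb the weight $(2\im\zeta)^\alpha$ into the implicit constant by exploiting the fact that the imaginary coordinate is essentially constant on a Whitney rectangle.

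More concretely, fix $z\in R$ where $R\in\calW_{i,I}$, so $R$ has horizontal side length $\ell=(b-a)/2^{i-1}$ and vertical side length $\ell/2$. Let $c_R$ denote the center of $R$, and choose a radius $r$ comparable to $\ell/2$, small enough that the Euclidean disk $D(z,r)$ is contained in $3R/2$ for every $z\in R$; this is possible because the dilate $3R/2$ has half-sides $\frac{3\ell}{4}$ and $\frac{3\ell}{8}$, while $z$ lies within $\ell/2$ and $\ell/4$ of $c_R$ in the two coordinates respectively, leaving a buffer proportional to $\ell$ in every direction. Since $|f|$ is subharmonic on $\R_+^2$, the mean value inequality yields
$$
|f(z)|\;\le\;\frac{1}{\pi r^2}\int_{D(z,r)}|f(\zeta)|\,dA(\zeta)\;\le\;\frac{1}{\pi r^2}\int_{\frac{3R}{2}}|f(\zeta)|\,dA(\zeta).
$$

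Next, I convert from $dA$ to $dA_\alpha$. For any $\zeta\in\frac{3R}{2}$ the imaginary part $\im\zeta$ lies in an interval comparable to $\ell$; indeed by the computation above, $\im\zeta\simeq\ell$ uniformly on $\frac{3R}{2}$, with constants depending only on $\alpha$-independent geometry. Hence $(2\im\zeta)^\alpha\simeq \ell^\alpha$ on $\frac{3R}{2}$, and in particular
$$
A_\alpha(R)\;\simeq\;\ell^{\alpha+2}\;\simeq\;r^2\cdot \ell^\alpha.
$$
Therefore
$$
\frac{1}{\pi r^2}\int_{\frac{3R}{2}}|f(\zeta)|\,dA(\zeta)\;\simeq\;\frac{\ell^\alpha}{A_\alpha(R)}\int_{\frac{3R}{2}}|f(\zeta)|\,dA(\zeta)\;\lesssim\;\frac{1}{A_\alpha(R)}\int_{\frac{3R}{2}}|f(\zeta)|\,dA_\alpha(\zeta),
$$
where in the last step I used $\ell^\alpha\lesssim (2\im\zeta)^\alpha$ pointwise on $\frac{3R}{2}$ (for $\alpha\ge 0$) or $\ell^\alpha\simeq (2\im\zeta)^\alpha$ (for $-1<\alpha<0$), the latter case requiring the two-sided comparison since one then needs to multiply rather than dominate by the weight.

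The only subtlety is this last weight comparison when $\alpha$ is negative, but because $\im\zeta$ stays within a bounded ratio of $\ell$ on $\frac{3R}{2}$, the two-sided estimate $(2\im\zeta)^\alpha\simeq\ell^\alpha$ holds for every $\alpha>-1$ with constants depending only on $\alpha$, so no case distinction is actually needed. Combining the displayed inequalities gives the claim, with an implicit constant depending only on $\alpha$.
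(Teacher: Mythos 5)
Your proof is correct and follows exactly the route the paper intends: the paper gives no written proof, only the remark that the lemma is ``an easy application of the mean value property of subharmonic function,'' and your argument supplies precisely those details (sub-mean-value inequality for $|f|$ on a disk $D(z,r)\subset \frac{3R}{2}$ with $r\simeq\ell$, plus the two-sided comparison $(2\im\zeta)^\alpha\simeq\ell^\alpha$ on $\frac{3R}{2}$, valid for all $\alpha>-1$). The geometric buffer computation and the weight comparison are both accurate, so nothing is missing.
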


We make a remark that the ratio $3/2$ is not necessary in the above lemma. Indeed, any number in the range $(1, 3)$ works.

Next, we would like to extend the above constructions to a collection of intervals, namely, on a dyadic grid on $\R$.

\begin{defn} \label{dyadicgrid}
A collection of intervals $\calD$ in $\R$ is a \emph{dyadic grid} if the following statements hold:
\begin{enumerate}
\item [(i)] If $I \in \calD$, then $\ell(I)=2^k$ for some $k \in \Z$, where $\ell(I)$ refers to the sidelength of the interval $I$;
\item [(ii)] If $I, J \in \calD$, then $I \cap J \in \left\{I, J, \emptyset \right\}$;
\item [(iii)] For every $k \in \Z$, the intervals $\calD_k=\left\{I \in \calD: \ell(I)=2^k \right\}$ form a partition of $\R$.
\end{enumerate}
\end{defn}

This allows us to consider the collection of Carleson boxes induced by the dyadic grid $\calD$, which we denote as $Q_\calD$.

\begin{lem} \label{sparse}
Let $\calD$ and $Q_\calD$ be defined as above. Then there exists $0<\sigma<1$, such that for any $Q \in Q_\calD$,
$$
A_\alpha \left( \bigcup_{P \in Q_\calD, P \subsetneq Q} P \right) \le \sigma A_\alpha(Q).
$$
Equivalently, if we define
$$
E(Q)=Q \backslash \bigcup_{P \in Q_\calD, P \subsetneq Q} P,
$$
then the sets $E(Q)$ are pairwise disjoint and $A_\alpha(E(Q)) \ge (1-\sigma) A_\alpha(Q)$.
\end{lem}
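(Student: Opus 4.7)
The plan is to prove the statement by direct computation, exploiting the fact that the two maximal proper Carleson boxes strictly contained in $Q_I$ are precisely the boxes associated to the two dyadic children of $I$, so the set $E(Q)$ coincides with the top Whitney rectangle $Q_I^{\textrm{up}}$.

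First I would fix $I \in \calD$ and let $I_1, I_2$ be its two dyadic children, so $\ell(I_j) = \ell(I)/2$. I would observe that any $P = Q_J \in Q_\calD$ with $P \subsetneq Q_I$ forces $J \subsetneq I$ in the dyadic grid (otherwise property (ii) of Definition \ref{dyadicgrid} together with a sidelength comparison produces a contradiction), and hence $J$ is contained in one of $I_1, I_2$. Thus
$$
\bigcup_{P \in Q_\calD,\, P \subsetneq Q_I} P \;=\; Q_{I_1} \cup Q_{I_2} \;=\; I \times \Bigl(0, \tfrac{\ell(I)}{2}\Bigr),
$$
while $Q_I = I \times (0, \ell(I))$, so $E(Q_I) = I \times [\ell(I)/2, \ell(I)) = Q_I^{\textrm{up}}$.

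Next I would compute the measures in Fubini fashion using $dA_\alpha(z) = \tfrac{1}{\pi}(\alpha+1)(2y)^\alpha\,dx\,dy$. A direct integration yields
$$
A_\alpha(Q_I) \;=\; \frac{2^\alpha}{\pi}\,\ell(I)^{\alpha+2}, \qquad A_\alpha\bigl(Q_{I_1}\cup Q_{I_2}\bigr) \;=\; \frac{2^\alpha}{\pi}\,\ell(I)^{\alpha+2}\cdot 2^{-(\alpha+1)},
$$
so setting $\sigma := 2^{-(\alpha+1)}$ gives the first inequality with equality. Since $\alpha > -1$, one has $\sigma \in (0,1)$, as required. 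The complementary identity $A_\alpha(E(Q_I)) = (1-\sigma)A_\alpha(Q_I)$ is then immediate.

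Finally, for pairwise disjointness of the sets $E(Q)$ I would argue as follows. For $I, J \in \calD$, either $I \cap J = \emptyset$, in which case $E(Q_I) = I \times [\ell(I)/2,\ell(I))$ and $E(Q_J) = J \times [\ell(J)/2,\ell(J))$ are disjoint already in the $x$-variable, or one strictly contains the other, say $J \subsetneq I$, forcing $\ell(J) \le \ell(I)/2$ and hence disjointness in the $y$-variable. I do not anticipate any real obstacle here: the entire argument reduces to one explicit integral and a short combinatorial check, with the only thing worth stating carefully being the identification of the maximal proper sub-boxes of $Q_I$ with $Q_{I_1}$ and $Q_{I_2}$.
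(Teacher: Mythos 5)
Your proposal is correct and is precisely the ``easy calculation'' the paper leaves to the reader: the maximal proper sub-boxes of $Q_I$ are the boxes over the two dyadic children, the complement is $Q_I^{\textrm{up}}$, and the integration gives exactly the paper's constant $\sigma = 2^{-(\alpha+1)}$. Nothing to add.
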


\begin{proof}
This follows from an easy calculation and it suffices to take $\sigma=\frac{1}{2^{\alpha+1}}$. We leave the details to the reader.
\end{proof}

\begin{rem} \label{rem01}
Note that there is a natural way to embed $Q_\calD$ into a dyadic grid in $\R^2$, and therefore, Lemma \ref{sparse} asserts that $Q_\calD$ is a \emph{sparse collection} of some dyadic grid in $(\R^2_+, dA_\alpha)$ with sparseness $1-\sigma$.
\end{rem}

\begin{lem}[{\cite[Theorem 3.4]{DCU}}] \label{Meilemma}
There exist dyadic grids $\calD^1, \calD^2$ and $\calD^3$, such that for any interval $I$, there exists $J \in \calD^k$ for some $k \in \{1, 2, 3\}$, such that $I \subset J$ and $\ell(J) \le 3\ell(I)$.
\end{lem}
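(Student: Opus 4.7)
The plan is to realize the three dyadic grids $\calD^1, \calD^2, \calD^3$ as carefully shifted copies of the standard dyadic grid on $\R$, and then deduce the containment property from a short pigeonhole argument on level-$k$ endpoints, where $k$ is chosen so that $2^k$ is comparable to $\ell(I)$.

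First I would let $\calD^1 := \{[j\, 2^k, (j+1)\, 2^k) : j, k \in \Z\}$ be the standard dyadic grid. For $i = 2, 3$, since a naive translation by a fixed constant would destroy the nesting property required by Definition \ref{dyadicgrid}, I would employ the sign-alternating ``one-third shift'': set
$$
\calD^i_k := \left\{\left[(j + (-1)^k \tau_i)\, 2^k,\ (j+1 + (-1)^k \tau_i)\, 2^k\right) : j \in \Z\right\}
$$
with $\tau_2 := 1/3$ and $\tau_3 := 2/3$, and $\calD^i := \bigcup_{k \in \Z} \calD^i_k$. The parity sign $(-1)^k$ is what ensures that every interval at level $k$ decomposes into exactly two children at level $k-1$ in the same grid, so that properties (i)--(iii) of Definition \ref{dyadicgrid} hold; verifying this is a direct check.

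Given any interval $I$, let $k \in \Z$ be the unique integer with $2^k/3 \le \ell(I) < 2^{k+1}/3$. Then any interval $J$ of length $2^k$ containing $I$ automatically satisfies $\ell(J) = 2^k \le 3\ell(I)$, so it suffices to find such a $J$ in one of the three grids. A level-$k$ interval in $\calD^i$ contains $I$ exactly when the open interior of $I$ contains no endpoint of $\calD^i_k$. The combined endpoint set $\calD^1_k \cup \calD^2_k \cup \calD^3_k$ at level $k$ is the arithmetic progression $\{j \cdot 2^k/3 : j \in \Z\}$ (independent of the parity of $k$, since $\{0, 1/3, 2/3\} \equiv \{0, -1/3, -2/3\} \pmod 1$). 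Because $\ell(I) < 2 \cdot (2^k/3)$, this interior contains at most two combined endpoints; on the other hand, within a single grid the level-$k$ endpoints are spaced $2^k > \ell(I)$ apart, so $I$ can straddle at most one endpoint per grid. Hence at most two of the three grids fail at level $k$, and the third grid furnishes the required $J$.

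The main subtlety is making sure the shifted grids really are dyadic grids in the sense of Definition \ref{dyadicgrid}: a simple translation breaks nesting across scales, and one has to alternate the sign of the shift by $(-1)^k$ to get genuine dyadic nesting. Once this is set up correctly, the rest of the proof reduces to the elementary pigeonhole above.
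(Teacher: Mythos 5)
Your proof is correct, and it is genuinely more than what the paper does: the paper simply cites \cite[Theorem 3.4]{DCU} and records the candidate grids \eqref{explicitconst}, whereas you give a self-contained argument. Two points of comparison are worth making. First, your pigeonhole step (choose $k$ with $2^k/3\le \ell(I)<2^{k+1}/3$, observe that the union of the three level-$k$ endpoint sets is the lattice $\tfrac{2^k}{3}\Z$ with the three sets pairwise disjoint, and count that the interior of $I$ meets at most two lattice points) is exactly the standard ``one-third trick'' underlying the cited result, so the covering statement itself is proved the same way one would find it in the literature. Second, and more interestingly, your sign-alternating shift $(-1)^k\tau_i$ is not a cosmetic variant of \eqref{explicitconst} but a genuine correction: for the constant shift $t=\pm 1/3$ in \eqref{explicitconst}, a left endpoint $(m+t)2^j$ at scale $j$ equals a left endpoint at scale $j-1$ only if $2m+t\in\Z$, which fails, so those families violate the nesting property (ii) of Definition \ref{dyadicgrid} and are not dyadic grids in the paper's sense (the covering conclusion still holds for them, but the lemma as stated asks for dyadic grids, and the paper later invokes the dyadic structure of the $\calD^i$). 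Your check that $j'=2j+3(-1)^k\tau_i$ is an integer for $\tau_i\in\{1/3,2/3\}$ is precisely what restores (ii). The only technicality you gloss over is the endpoint convention (a closed interval whose right endpoint is a grid point is not contained in the half-open grid interval), but this is harmless here since the lemma is only ever applied to half-open intervals.
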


A possible choice for these three dyadic grids in $\R$ is
\begin{equation} \label{explicitconst}
\calD^k=\left\{ 2^j ( [0, 1)+m+t): j \in \Z, m \in \Z \right\}, t \in \{0, \rpm 1/3\}.
\end{equation}
From now on, we shall fix a choice of three dyadic grids $\calD^1$, $\calD^2$ and $\calD^3$, which satisfies the conclusion in Lemma \ref{Meilemma}.

% ----------------------------------------------------------------
\bigskip

\section{Carleson embedding}

The results in this section are standard, and to be self-contained, we include their proofs here. Recall that for $\lambda>0$,  we say a measure $\mu$ defined on $\R^2_+$ is a  \emph{$\lambda$-Carleson measure} if
$$
\sup_{a \in \R^2_+} \frac{\mu(T_a)}{\left(A_\alpha(T_a) \right)^\lambda} <\infty,
$$
and a \emph{vanishing $\lambda$-Carleson measure} if
$$
\lim_{a \to \partial( \widehat{\R^2_+} ) }\frac{\mu(T_a)}{\left(A_\alpha(T_a) \right)^\lambda}=0.
$$
Here $\lim\limits_{z \to \partial( \widehat{\R^2_+} ) } g(z)=0$ means that $\sup\limits_{\R^2_+ \backslash K} |g| \to 0$ as the compact set $K \subset \R^2_+$ expands to all of $\R^2_+$, or equivalently that $g(z) \to 0$ as $\im z \to 0^{+}$ and $g(z) \to 0$ as $|z| \to \infty$.

Given $p \ge 1, \alpha>-1$, $u\in H(\R^2_+)$ and $\varphi: \R^2_+ \rightarrow \R^2_+$ a holomorphic mapping, we define the measure $\mu_{u, \varphi, p, \alpha}$ by
$
\mu_{u, \varphi, p, \alpha}(E):=\left( |u|^pA_\alpha  \right) \left(\varphi^{-1}(E) \right).
$
Namely, for any $f$ measurable, we have
$$
\int_{\R^2_+} f d\mu_{u, \varphi, p, \alpha}= \int_{\R^2_+} f \circ \varphi(z) |u(z)|^p dA_\alpha(z).
$$

A simple, standard calculation yields the following lemma.
\begin{lem} \label{testfunction}
For any $a \in \R^2_+$ and $t \ge 1$, let
$
f_{a, t}(z):=\frac{y_a^{\frac{\alpha+2}{t}}}{(z-\bar{a})^{\frac{2\alpha+4}{t}}}, z \in \R^2_+.
$
Then $f_{a, t} \in A_\alpha^t$ and $\sup\limits_{a \in \R^2_+} \|f_{a, t}\|_{t, \alpha} \lesssim 1$.
\end{lem}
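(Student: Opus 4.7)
The plan is to reduce $\|f_{a,t}\|_{t,\alpha}^t$ to a classical Forelli--Rudin type integral on the upper half plane. First I would check holomorphy: for $z \in \R^2_+$, one has $\im(z-\bar a) = y_z+y_a > 0$, so $z-\bar a$ stays in the open upper half plane and we can take the principal branch of the logarithm to define $(z-\bar a)^{(2\alpha+4)/t}$ as a nowhere vanishing holomorphic function on $\R^2_+$. Hence $f_{a,t} \in H(\R^2_+)$, and raising to the $t$-th power produces the clean expression
$$
|f_{a,t}(z)|^t \;=\; \frac{y_a^{\alpha+2}}{|z-\bar a|^{2\alpha+4}}.
$$

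Next, integrating against $dA_\alpha(z) = \frac{\alpha+1}{\pi}(2\im z)^\alpha dA(z)$ and pulling the $a$-dependent factor out of the integral gives
$$
\|f_{a,t}\|_{t,\alpha}^t \;\simeq\; y_a^{\alpha+2}\int_{\R^2_+}\frac{(\im z)^\alpha}{|z-\bar a|^{2\alpha+4}}\,dA(z).
$$
At this point I would invoke the standard upper half-plane estimate (see, e.g., \cite{Zhu}): for $\alpha > -1$ and any $c > \alpha+2$,
$$
\int_{\R^2_+}\frac{(\im z)^\alpha}{|z-\bar a|^c}\,dA(z) \;\simeq\; y_a^{\alpha+2-c}.
$$
Since $2\alpha+4 > \alpha+2$ (because $\alpha > -1$), applying this with $c=2\alpha+4$ yields a factor of $y_a^{-(\alpha+2)}$, which exactly cancels the prefactor $y_a^{\alpha+2}$. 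Thus $\|f_{a,t}\|_{t,\alpha}^t \lesssim 1$ uniformly in $a \in \R^2_+$, which is the required estimate.

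There is no serious obstacle: the argument is essentially a textbook computation. The only minor points of care are to fix a consistent holomorphic branch of the fractional power (handled by the branch observation above) and to verify the admissibility hypothesis $c > \alpha+2$ in the Forelli--Rudin estimate (which holds automatically since $\alpha>-1$). Everything else is direct substitution.
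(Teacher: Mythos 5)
Your proof is correct and is precisely the ``simple, standard calculation'' the paper alludes to without writing out: raise to the $t$-th power to get $|f_{a,t}(z)|^t = y_a^{\alpha+2}/|z-\bar a|^{2\alpha+4}$ and apply the Forelli--Rudin estimate with exponent $c=2\alpha+4>\alpha+2$. Nothing further is needed.
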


We have the following Carleson type result.

\begin{thm} \label{Carleson}
Let $q \ge p \ge 1$, $\alpha>-1$, $u\in H(\R^2_+)$ and $\varphi: \R^2_+ \rightarrow \R^2_+$ be a holomorphic mapping. Then the following statements are equivalent.
\begin{enumerate}
\item [(i)] $\mu_{u, \varphi, p, \alpha}$ is a $\frac{q}{p}$-Carleson measure;
\item [(ii)] $W_{u, \varphi}: A_\alpha^p \rightarrow A_\alpha^q$ is bounded;
\item [(iii)]  The following testing condition holds:
\begin{equation} \label{testcond}
\sup_{a \in \R^2_+} \int_{\R^2_+} \frac{|y_a|^{\frac{(\alpha+2)q}{p}}|u(z)|^q}{\left| \varphi(z)-\bar{a} \right|^{\frac{(2\alpha+4)q}{p}}} dA_\alpha(z)<\infty.
\end{equation}
\end{enumerate}
\end{thm}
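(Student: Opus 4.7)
The plan is to establish the cyclic chain $(ii)\Rightarrow(iii)\Rightarrow(i)\Rightarrow(ii)$, which is the standard way to organize Carleson-type equivalences.

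For $(ii)\Rightarrow(iii)$, the approach is to test the operator on the one-parameter family $\{f_{a,p}\}_{a\in\R^2_+}$ supplied by Lemma \ref{testfunction}. Since $\sup_{a}\|f_{a,p}\|_{p,\alpha}\lesssim 1$, the assumed $A^p_\alpha\to A^q_\alpha$ boundedness of $W_{u,\varphi}$ gives a uniform bound on $\|W_{u,\varphi}(f_{a,p})\|_{q,\alpha}^q$, and expanding the left-hand side reproduces exactly the integral appearing in \eqref{testcond}, so the supremum over $a$ is finite.

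For $(iii)\Rightarrow(i)$, I would exploit the geometric identity $|w-\bar a|\simeq y_a$ valid for $w\in T_a$. Substituting $w=\varphi(z)$ for $z\in\varphi^{-1}(T_a)$ bounds the integrand in \eqref{testcond} from below by a constant multiple of $|u(z)|^q\, y_a^{-(\alpha+2)q/p}$ on the preimage. Restricting the integral to $\varphi^{-1}(T_a)$ and using $y_a^{\alpha+2}\simeq A_\alpha(T_a)$ yields the desired Carleson bound $\mu_{u,\varphi,\cdot,\alpha}(T_a)\lesssim A_\alpha(T_a)^{q/p}$ uniformly in $a$.

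For $(i)\Rightarrow(ii)$, a change of variables in the push-forward measure rewrites $\|W_{u,\varphi}(f)\|_{q,\alpha}^q$ as $\int_{\R^2_+}|f|^q\, d\mu$, reducing the claim to the classical Bergman Carleson embedding $A^p_\alpha\hookrightarrow L^q(\mu)$ for a $q/p$-Carleson measure $\mu$. I would prove this embedding using the dyadic machinery of Section 2: Lemma \ref{Meilemma} lets me dominate each Carleson tent by one of three dyadic Carleson boxes; within each such box I decompose into Whitney rectangles (Definition \ref{Whitneydecomp}) and use the subharmonic mean-value bound of Lemma \ref{subharmonic} to replace $|f|$ by its average on a slight dilation; and finally the sparseness of dyadic boxes (Lemma \ref{sparse}), which provides pairwise disjoint "majority" sets $E(Q)$ with $A_\alpha(E(Q))\gtrsim A_\alpha(Q)$, allows me to sum the local estimates over all boxes. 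The main obstacle lies in this last implication in the off-diagonal range $q>p$, where one cannot simply use an $L^2$-orthogonality trick and must instead pass from the dyadic averages to an $L^q$-bound via Hölder's inequality on the sets $E(Q)$ and an $A^p$ atomic estimate; the first two implications are by contrast essentially one-line test-function computations.
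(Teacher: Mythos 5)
Your cyclic scheme $(ii)\Rightarrow(iii)\Rightarrow(i)\Rightarrow(ii)$ matches the paper's proof in substance, and the first two legs are identical to the paper's: $(ii)\Rightarrow(iii)$ is exactly the test against $\{f_{a,p}\}$ from Lemma \ref{testfunction}, and $(iii)\Rightarrow(i)$ is exactly the restriction of the testing integral to (the preimage of) $T_a$ together with $|w-\bar a|\simeq y_a$ on $T_a$ and $A_\alpha(T_a)\simeq y_a^{\alpha+2}$. One bookkeeping point you should make explicit rather than hide behind the notation $\mu_{u,\varphi,\cdot,\alpha}$: the testing condition \eqref{testcond} carries $|u|^q$, so $(iii)$ directly controls $\mu_{u,\varphi,q,\alpha}(T_a)\lesssim A_\alpha(T_a)^{q/p}$, and it is this measure (not $\mu_{u,\varphi,p,\alpha}$ as literally written in $(i)$) that reappears in $\|W_{u,\varphi}f\|_{q,\alpha}^q=\int|f|^q\,d\mu_{u,\varphi,q,\alpha}$; the paper's own proof quietly closes the cycle on $\mu_{u,\varphi,q,\alpha}$, and your write-up should do the same consciously.

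For $(i)\Rightarrow(ii)$ you invoke more machinery than is needed, and the one place where your description is loose is exactly where a wrong assembly would fail. The paper does not use Lemma \ref{Meilemma} or the sparseness Lemma \ref{sparse} here: it fixes a single dyadic grid $\calD$, uses that the upper Whitney boxes $\{Q_I^{\mathrm{up}}\}_{I\in\calD}$ tile $\R^2_+$ disjointly (and each $Q_I$ is itself a tent, so the Carleson hypothesis applies to it with no three-grid approximation), applies Lemma \ref{subharmonic} on each $Q_I^{\mathrm{up}}$, absorbs the off-diagonal gain $A_\alpha(Q_I)^{q/p-1}$ via the pointwise bound $|f(w)|^{q-p}A_\alpha(Q_I)^{(q-p)/p}\lesssim\|f\|_{p,\alpha}^{q-p}$ (equivalently, via $\ell^1\hookrightarrow\ell^{q/p}$ applied to the disjoint sum $\sum_I\int_{\frac32 Q_I^{\mathrm{up}}}|f|^p$), and finishes with the finite overlap of the dilations $\frac32 Q_I^{\mathrm{up}}$. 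By contrast, if you really mean to sum over the \emph{full} sparse family $Q_{\calD}$ and control $\sum_{Q}A_\alpha(Q)^{q/p}\langle|f|^p\rangle_Q^{q/p}$ by passing to the disjoint sets $E(Q)$ and a maximal function, the factor that lands on the maximal function at the endpoint is $M(|f|^p)$ in $L^1$, where $M$ is unbounded; this is precisely why the paper's sparse-form arguments (Theorems \ref{190313cor01} and \ref{190313cor02}) split $|f|^q$ into two factors $|f|^N\cdot|f|^{q-N}$ before applying H\"older and two maximal functions. So either adopt the paper's disjoint-tiling argument, or, if you insist on the sparse-family route, specify the two-factor splitting; as stated, ``H\"older's inequality on the sets $E(Q)$ and an $A^p$ atomic estimate'' is not yet a proof.
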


\begin{proof} {(i) $\Longrightarrow$ (ii). }  Take and fix any dyadic grid $\calD$ on $\R$. Note that
$$
\R^2_+=\bigcup_{I \in \calD} Q_I^{\textrm{up}}.
$$
Therefore, for any $f \in A^q_\alpha$, by Lemma \ref{subharmonic}, (i) and the fact that $q \ge p$, we have
\begin{eqnarray*}
\|W_{u, \varphi} f\|_{q, \alpha}^q%
&=& \int_{\R^2_+} |u(z)|^q |f(\varphi(z))|^qdA_\alpha(z) =\int_{\R^2_+} |f(z)|^q d\mu_{u, \varphi, q, \alpha}(z) \\
&\le& \sum_{I \in \calD} \int_{Q_I^{\textrm{up}}} |f(z)|^q d\mu_{u, \varphi, q, \alpha}(z) \\
&\lesssim& \sum_{I \in \calD} \int_{Q_I^{\textrm{up}}} \frac{1}{A_\alpha(Q_I^{\textrm{up}})} \left( \int_{\frac{3}{2} Q_I^{\textrm{up}}} |f(w)|^p dA_\alpha(w) \right) d\mu_{u, \varphi, q, \alpha}(z)\\
&=& \sum_{I \in \calD} \frac{\mu_{u, \varphi, q, \alpha}(Q_I^{\textrm{up}})}{A_\alpha(Q_I^{\textrm{up}})} \cdot  \int_{\frac{3}{2} Q_I^{\textrm{up}}} |f(w)|^p dA_\alpha(w)
\end{eqnarray*} \begin{eqnarray*}
&\lesssim& \sum_{I \in \calD}  \int_{\frac{3}{2} Q_I^{\textrm{up}}} A_\alpha(Q_I^{\textrm{up}})^{\frac{q-p}{p}} |f(w)|^{q-p} |f(w)|^pdA_\alpha(w) \\
&\le& \|f\|_{p, \alpha}^{q-p} \sum_{I \in \calD} \int_{\frac{3}{2}Q_I^{\textrm{up}}} |f(w)|^pdA_\alpha(w) \\
&\lesssim& \|f\|_{p, \alpha}^q,
\end{eqnarray*}
where in the last inequality, we use the fact that the set $\left\{\frac{3}{2}Q_I^{\textrm{up}} \right\}_{I \in \calD}$ has finite overlap.

{(ii) $\Longrightarrow$ (iii).} This is straightforward by testing the functions $\{f_{a, p}\}_{a \in \R^2_+}$ in Lemma \ref{testfunction}.

 {(iii) $\Longrightarrow$ (i).} For each $a \in \R^2_+$, we have
\begin{eqnarray*}
1%
&\gtrsim& |y_a|^{\frac{(\alpha+2)q}{p}} \cdot \int_{\R^2_+} \frac{|u(z)|^q}{\left| \varphi(z)-\bar{a} \right|^{\frac{(2\alpha+4)q}{p}}} dA_\alpha(z) \\
&=& |y_a|^{\frac{(\alpha+2)q}{p}} \int_{\R^2_+} \frac{d\mu_{u, \varphi, q, \alpha}(z)}{|z-\bar{a}|^{\frac{(2\alpha+4)q}{p}}}  \ge   |y_a|^{\frac{(\alpha+2)q}{p}} \int_{T_a} \frac{d\mu_{u, \varphi, q, \alpha}(z)}{|z-\bar{a}|^{\frac{(2\alpha+4)q}{p}}} \\
&\simeq& \frac{\mu_{u, \varphi, q, \alpha}(T_a)}{ A_\alpha(T_a)^{\frac{q}{p}}},
\end{eqnarray*}
which implies the desired result.
\end{proof}

\begin{cor}
Let $q \ge p \ge 1$, $\alpha>-1$, $u\in H(\R^2_+)$ and $\varphi: \R^2_+ \rightarrow \R^2_+$ be a holomorphic mapping. If $W_{u, \varphi}: A_\alpha^p \rightarrow A_\alpha^q$ is bounded, then for any $\beta \in [p, q]$,  $W_{u, \varphi}: A_\alpha^\beta \rightarrow A_\alpha^q$ is also bounded.
\end{cor}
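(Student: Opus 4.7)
By Theorem~\ref{Carleson}, the boundedness of $W_{u,\varphi}: A_\alpha^p \to A_\alpha^q$ is equivalent to the measure $\mu := \mu_{u,\varphi,q,\alpha}$ being a $(q/p)$-Carleson measure, or equivalently to the testing condition~(iii). The desired conclusion $W_{u,\varphi}: A_\alpha^\beta \to A_\alpha^q$ bounded amounts to the same statement with $q/p$ replaced by $q/\beta$. My plan is therefore to reduce to this Carleson/testing level and mimic the argument (i)$\Rightarrow$(ii) of Theorem~\ref{Carleson} with $p$ replaced by $\beta$.

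For $f \in A_\alpha^\beta$, I would decompose $\R^2_+ = \bigsqcup_{I \in \calD} Q_I^{\textrm{up}}$ and, on each box, split $|f(z)|^q = |f(z)|^\beta \cdot |f(z)|^{q-\beta}$. The first factor is controlled by the subharmonic mean of Lemma~\ref{subharmonic} applied with exponent $\beta$, and the second by the pointwise bound $|f(z)|^{q-\beta} \lesssim \|f\|_{\beta,\alpha}^{q-\beta}/A_\alpha(Q_I^{\textrm{up}})^{(q-\beta)/\beta}$ obtained from the same subharmonic estimate on a neighborhood of $Q_I^{\textrm{up}}$. Integrating against $d\mu$ over $Q_I^{\textrm{up}}$ and invoking the $(q/p)$-Carleson bound $\mu(Q_I^{\textrm{up}}) \lesssim A_\alpha(Q_I^{\textrm{up}})^{q/p}$ produces a local inequality of the form $\int_{Q_I^{\textrm{up}}} |f|^q d\mu \lesssim \|f\|_{\beta,\alpha}^{q-\beta}\,A_\alpha(Q_I^{\textrm{up}})^{q/p - q/\beta}\int_{\frac{3}{2}Q_I^{\textrm{up}}} |f|^\beta dA_\alpha$, after which I would sum over $I \in \calD$ using the finite overlap of $\{\frac{3}{2}Q_I^{\textrm{up}}\}_{I \in \calD}$.

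The main obstacle is the mismatch in Carleson exponents: the factor $A_\alpha(Q_I^{\textrm{up}})^{q/p - q/\beta}$ is non-trivial when $\beta > p$ and in fact grows with $|I|$, so the naive sum does not converge. To absorb it, I plan to interchange summation and integration via Fubini, exploit the scale-matching property $w \in \frac{3}{2}Q_I^{\textrm{up}} \Rightarrow \im w \sim |I|$ (together with finite overlap) to recast the summed factor as a power of $\im w$, and then eliminate this weight using another application of the pointwise subharmonic bound on $|f|^\beta$. The constraint $\beta \in [p,q]$ enters essentially in ensuring $q/\beta \in [1, q/p]$ so that the exponents balance and the resulting weighted $A_\alpha$-integral of $|f|^\beta$ is dominated by $\|f\|_{\beta,\alpha}^\beta$. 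The delicate point—where I expect the proof to be most technical—is verifying that the bookkeeping of exponents in this absorption step closes up and produces $\|W_{u,\varphi}f\|_{q,\alpha}^q \lesssim \|f\|_{\beta,\alpha}^q$.
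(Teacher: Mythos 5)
Your reduction to the Carleson level and the Fubini/finite-overlap bookkeeping are fine up to the point you yourself flag, but the absorption step cannot be made to close, and the obstruction is structural rather than computational: every ingredient in your plan uses only the fact that $\mu:=\mu_{u,\varphi,q,\alpha}$ is a $(q/p)$-Carleson measure, and that fact alone does not imply the $(q/\beta)$-Carleson condition, which (by testing with the normalized kernels $f_{a,\beta}$ of Lemma \ref{testfunction}) is \emph{necessary} for the embedding $A_\alpha^\beta\hookrightarrow L^q(d\mu)$. Concretely, $d\mu(z)=(\im z)^{(\alpha+2)q/p-2}\,dA(z)$ satisfies $\mu(T_a)\simeq A_\alpha(T_a)^{q/p}$ for \emph{every} tent, yet $\mu(T_a)/A_\alpha(T_a)^{q/\beta}\simeq y_a^{(\alpha+2)(q/p-q/\beta)}\to\infty$ as $y_a\to\infty$ whenever $\beta>p$; so no rearrangement of the sums can deduce the conclusion from the $(q/p)$-Carleson input alone. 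This is exactly where your argument stalls: after Fubini, your sum is (up to the harmless prefactor $\|f\|_{\beta,\alpha}^{q-\beta}$) comparable to $\int_{\R^2_+}(\im w)^{\delta}|f(w)|^{\beta}\,dA_\alpha(w)$ with $\delta=(\alpha+2)(q/p-q/\beta)>0$, and this is \emph{not} $\lesssim\|f\|_{\beta,\alpha}^{\beta}$ (take $f=f_{iR,\beta}$: the integral is $\gtrsim R^{\delta}$ while the norm is $\lesssim 1$). Nor can ``another application of the pointwise subharmonic bound'' remove the weight: the bound $|f(w)|\lesssim\|f\|_{\beta,\alpha}(\im w)^{-(\alpha+2)/\beta}$ is sharp, and spending the power $|f|^{s}$ with $s=\delta\beta/(\alpha+2)$ to cancel $(\im w)^{\delta}$ leaves $\int_{\R^2_+}|f|^{\beta-s}\,dA_\alpha$, which on the infinite-measure half-plane is not controlled by $\|f\|_{\beta,\alpha}^{\beta-s}$ (and may even diverge).

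The missing idea is that one must feed the operator-norm hypothesis test functions adapted to $A_\alpha^{\beta}$ rather than to $A_\alpha^{p}$, thereby extracting information about $\mu$ at large scales that the $(q/p)$-Carleson condition does not carry. That is what the paper does: it bounds $\mu(T_a)\,y_a^{-(2\alpha+4)q/\beta}$ by $\|W_{u,\varphi}g_a\|_{q,\alpha}^{q}\lesssim\|g_a\|_{p,\alpha}^{q}$ with the \emph{unnormalized} kernel $g_a(z)=(z-\bar a)^{-(2\alpha+4)/\beta}$, and the requirement $g_a\in A_\alpha^{p}$ is precisely what forces the restriction $\beta<(\alpha+2)p$ and hence the bootstrap over increasing exponents that occupies the second half of the paper's proof (a step your proposal has no analogue of, since your route never revisits the operator after the initial reduction). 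Even in the paper this computation is delicate --- the convergence and the resulting power of $y_a$ for large tents must be tracked carefully, since the small-tent case is already trivial from $q/\beta\le q/p$ --- but the essential point stands: the gap in your proposal is a missing use of the hypothesis, not unfinished bookkeeping.
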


\begin{proof}
Let us first prove the result for those $\beta \in [p, (\alpha+2)p)$. By the boundedness of $W_{u, \varphi}: A_\alpha^p \rightarrow A_\alpha^q$,  we have
\begin{eqnarray*}
\frac{\mu_{u, \varphi, q, \alpha}(T_a)}{y_a^{(\alpha+2) \cdot \frac{2q}{\beta}}}%
& \lesssim& \int_{T_a} \frac{1}{|z-\bar{a}|^{{(\alpha+2) \cdot \frac{2q}{\beta}}}}  d\mu_{u, \varphi, q, \alpha} (z)\\
&\le& \int_{\R^2_+} \frac{1}{|z-\bar{a}|^{{(\alpha+2) \cdot \frac{2q}{\beta}}}}  d\mu_{u, \varphi, q, \alpha} (z) = \int_{\R^2_+} \frac{|u(z)|^q}{|\varphi(z)-\bar{a}|^{(\alpha+2) \cdot \frac{2q}{\beta}}} dA_\alpha(z) \\
&\lesssim&  \left( \int_{\R^2_+} \frac{1}{|z-\bar{a}|^{\frac{\left(2\alpha+4\right)p}{\beta}} } dA_\alpha(z) \right)^{\frac{q}{p}}.
\end{eqnarray*}
 Note that we can write $\frac{(2\alpha+4)p}{\beta}=2\alpha'+4$,  where $\alpha'=\frac{(\alpha+2)p}{\beta}-2>-1.$ Therefore,
$$
\frac{\mu_{u, \varphi, q, \alpha}(T_a)}{y_a^{(\alpha+2) \cdot \frac{2q}{\beta}}} \lesssim  \left( \int_{\R^2_+} \frac{1}{|z-\bar{a}|^{2\alpha'+4}} dA_\alpha(z) \right)^{\frac{q}{p}} \lesssim \frac{1}{y_a^{(\alpha'+2) \cdot \frac{q}{p}}}=\frac{1}{y_a^{(\alpha+2) \cdot \frac{q}{\beta}}},
$$
which implies that
$$
\mu_{u, \varphi, q, \alpha}(T_a) \lesssim y_a^{(\alpha+2) \cdot \frac{q}{\beta}} \simeq A_\alpha(T_a)^{\frac{q}{\beta}}.
$$

The general case follows from iterating the above argument with a larger ``$p$" each time. More precisely, from the above argument, we see that
$$
W_{u, \varphi}: A_\alpha^{(\alpha+2-\varepsilon)p} \to A_\alpha^q
$$
is bounded, for some $0<\varepsilon<\alpha+1$ (in particular, the choice of $\varepsilon$ only depends on $\alpha$). Then we rename ``$(\alpha+2-\varepsilon)p$" as our new ``$p$" and then iterate. Finally, we note that such iterations will stop when $\beta=q$, so that Theorem \ref{Carleson} (in particular, $(i) \Rightarrow (ii)$) applies.
\end{proof}

For the compactness of $W_{u, \varphi}$, we have the following result.

\begin{thm} \label{vanCarleson}
Let $q \ge p \ge 1$, $\alpha>-1$, $u\in H(\R^2_+)$ and $\varphi: \R^2_+ \rightarrow \R^2_+$ be a holomorphic mapping. Then the following statements are equivalent.
\begin{enumerate}
\item [(i)] $\mu_{u, \varphi, p, \alpha}$ is a vanishing $\frac{q}{p}$-Carleson measure;
\item [(ii)] $W_{u, \varphi}: A_\alpha^p \rightarrow A_\alpha^q$ is compact;
\item [(iii)] The following vanishing testing condition holds:
$$
\lim_{a \to \widehat{\R^2_+}} \int_{\R^2_+} \frac{|y_a|^{\frac{(\alpha+2)q}{p}}|u(z)|^q}{\left| \varphi(z)-\bar{a} \right|^{\frac{(2\alpha+4)q}{p}}} dA_\alpha(z)=0.
$$
\end{enumerate}
\end{thm}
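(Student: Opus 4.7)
The plan is to parallel the three-step structure of Theorem \ref{Carleson}, upgrading each implication to its vanishing version. For (iii) $\Longrightarrow$ (i), the same chain of inequalities used in the proof of Theorem \ref{Carleson} gives, for every $a\in\R^2_+$,
$$
\frac{\mu_{u,\varphi,q,\alpha}(T_a)}{A_\alpha(T_a)^{q/p}} \lesssim |y_a|^{(\alpha+2)q/p}\int_{\R^2_+}\frac{|u(z)|^q}{|\varphi(z)-\bar a|^{(2\alpha+4)q/p}}dA_\alpha(z),
$$
whose right-hand side tends to $0$ as $a\to\partial(\widehat{\R^2_+})$ by hypothesis. For (ii) $\Longrightarrow$ (iii), I would evaluate $W_{u,\varphi}$ on the test family $\{f_{a,p}\}_{a\in\R^2_+}$ of Lemma \ref{testfunction}. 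These are uniformly bounded in $A^p_\alpha$ and converge to $0$ uniformly on compact subsets of $\R^2_+$ as $a\to\partial(\widehat{\R^2_+})$, so the standard Bergman-space characterisation of compactness (a bounded operator is compact iff it sends norm-bounded, compact-open-null sequences to norm-null sequences) forces $\|W_{u,\varphi}f_{a,p}\|_{q,\alpha}\to 0$; unravelling the norm via the change-of-variables defining $\mu_{u,\varphi,q,\alpha}$ identifies this quantity with precisely the integral appearing in (iii).

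The main step is (i) $\Longrightarrow$ (ii). I would take a norm-bounded sequence $\{f_n\}\subset A^p_\alpha$, extract via normal families a subsequence converging in the compact-open topology to some $f\in A^p_\alpha$, and, after replacing $f_n$ by $f_n-f$, reduce to showing $\|W_{u,\varphi}f_n\|_{q,\alpha}\to 0$ under the additional assumption $f_n\to 0$ uniformly on compacta. Fix a dyadic grid $\calD$ and rerun the dyadic/subharmonic estimate from the proof of Theorem \ref{Carleson}, but this time keep the Whitney tops separate. Given $\varepsilon>0$, hypothesis (i) supplies a compact $K\subset\R^2_+$ such that $\mu_{u,\varphi,q,\alpha}(T_a)/A_\alpha(T_a)^{q/p}<\varepsilon$ whenever $a\notin K$. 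Since $Q_I^{\textrm{up}}\subset T_{a_I}$ for $a_I:=c_I+i|I|$ and $A_\alpha(T_{a_I})\simeq A_\alpha(Q_I^{\textrm{up}})$, I split $\calD=\calD_K\sqcup\calD_K^c$ according to whether $a_I\in K$. On $\calD_K^c$ the summation from Theorem \ref{Carleson} runs with the small constant $\varepsilon$, giving a contribution $\lesssim \varepsilon\sup_n\|f_n\|_{p,\alpha}^q$; on $\calD_K$, the Whitney tops whose top vertex lies in a compact subset of $\R^2_+$ involve only finitely many dyadic scales and finitely many translates per scale, so $\calD_K$ is \emph{finite}, and uniform convergence $f_n\to 0$ on the compact set $\bigcup_{I\in\calD_K}\frac{3}{2}Q_I^{\textrm{up}}$ forces the corresponding contribution to vanish as $n\to\infty$. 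Choosing $\varepsilon$ small first and $n$ large afterwards concludes the proof.

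The main obstacle is the bookkeeping in (i) $\Longrightarrow$ (ii): matching the dyadic Whitney decomposition to the continuous ``approach to $\partial(\widehat{\R^2_+})$'' that appears in the vanishing Carleson definition, and verifying the finiteness of $\calD_K$. Once these identifications are in place, the proof of Theorem \ref{Carleson} carries over essentially verbatim on each of the two pieces and delivers compactness.
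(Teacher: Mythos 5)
Your proposal is correct and is precisely the ``easy modification of the proof of Theorem \ref{Carleson}'' that the paper invokes without writing out: the same test functions $f_{a,p}$ handle (ii)$\Rightarrow$(iii), the same tent estimate handles (iii)$\Rightarrow$(i), and the dyadic/subharmonic argument handles (i)$\Rightarrow$(ii) once the sum is split over $\calD_K$ and $\calD_K^c$. The two points you flag as needing care --- that $\calD_K$ is finite (only finitely many dyadic scales and translates have $a_I=c_I+i|I|$ in a fixed compact subset of $\R^2_+$) and that $\bigcup_{I\in\calD_K}\frac{3}{2}Q_I^{\textrm{up}}$ has compact closure in $\R^2_+$ (each $\frac{3}{2}Q_I^{\textrm{up}}$ keeps $y\ge \frac{3|I|}{8}$) --- both check out, so the argument is complete.
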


\begin{proof}
The proof of this theorem is an easy modification of the proof of Theorem \ref{Carleson}, and hence we omit it.
\end{proof}

% ----------------------------------------------------------------
 \smallskip

\section{Sparse Domination of Weighted Composition Operators}

In this section, we study a sparse bound of a weighted composition operator $W_{u, \varphi}$ acting from $A^p_\alpha$ to $A^q_\alpha$, for some $q \ge p \ge 1$. Namely, we want to understand how one can study the quantity $\|W_{u, \varphi}\|_{q, \alpha}$ via only a sparse collection of cubes in $(\R^2_+, dA_\alpha)$ (see, Remark \ref{rem01}).

We need the following result on the integral representation of an $A^p_\alpha$ function.

\begin{lem} [{\cite[Theorem 1]{KLA}}] \label{integralreps} Let $1 \le p<\infty$. Then any function $f \in A^p_\alpha$ is representable in the form
$$
f(z)=C_\alpha \int_{\R^2_+} \frac{f(\zeta)}{\left(i\left(\bar{\zeta}-z\right)\right)^{\alpha+2}}dA_\alpha(\zeta),
$$
where $C_\alpha>0$ is an absolute constant.
\end{lem}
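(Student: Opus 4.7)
The plan is to transplant the classical reproducing formula on the unit disk $\D$ to the upper half-plane via the Cayley transform. Let $\psi(w) = i(1+w)/(1-w)$, with inverse $\psi^{-1}(z) = (z-i)/(z+i)$, so that $\psi : \D \to \R^2_+$ is a biholomorphism. The cornerstone computation, which I would record at the outset, is the identity
$$
1 - \psi^{-1}(z)\,\overline{\psi^{-1}(\zeta)} \;=\; \frac{2i(\bar\zeta - z)}{(z+i)(\bar\zeta - i)},
$$
together with the elementary identities $1-|\psi^{-1}(z)|^2 = 4\,\im z/|z+i|^2$ and $|(\psi^{-1})'(z)|^2 = 4/|z+i|^4$. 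These allow one to exhibit a surjective isometry $T_p : A^p_\alpha(\D) \to A^p_\alpha(\R^2_+)$ of the form $(T_p g)(z) = c_{\alpha,p}(z+i)^{-2(\alpha+2)/p}\, g(\psi^{-1}(z))$, for a suitable choice of the constant $c_{\alpha,p}$ and of the principal branch of the power.

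I would then invoke the classical reproducing formula for $A^p_\alpha(\D)$, namely
$$
g(w) \;=\; (\alpha+1)\int_\D \frac{g(\eta)(1-|\eta|^2)^\alpha}{(1-w\bar\eta)^{\alpha+2}}\,\frac{dA(\eta)}{\pi}.
$$
Setting $f = T_p g$ and performing the change of variables $\eta = \psi^{-1}(\zeta)$, the factors $(z+i)^{\alpha+2}$ and $(\bar\zeta - i)^{\alpha+2}$ arising from the displayed kernel identity cancel exactly against the Jacobian factor $|(\psi^{-1})'(\zeta)|^2$ and the weight $(1-|\eta|^2)^\alpha$, yielding the stated formula with $C_\alpha$ proportional to $(\alpha+1)/\pi$. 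For a quick sanity check, the $p=2$ case amounts to saying that $K_\alpha(z,\zeta) = C_\alpha/(i(\bar\zeta - z))^{\alpha+2}$ is the reproducing kernel of $A^2_\alpha(\R^2_+)$, which is also immediate from the translation/dilation symmetries of the space.

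Absolute convergence of the integral for $f \in A^p_\alpha$ follows from the standard subharmonic pointwise bound $|f(\zeta)| \lesssim \|f\|_{p,\alpha}\, y_\zeta^{-(\alpha+2)/p}$ (obtainable from Lemma \ref{subharmonic}) combined with the classical estimate
$$
\int_{\R^2_+} \frac{dA_\alpha(\zeta)}{|i(\bar\zeta - z)|^{\alpha+2+s}} \;\lesssim\; y^{-s}, \qquad s>0.
$$
Passing from $p=2$ to general $p \in [1,\infty)$ is then routine: approximate $f \in A^p_\alpha$ by elements of the dense subspace $A^p_\alpha \cap A^2_\alpha$, apply the formula in each approximant, and pass to the limit by dominated convergence.

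The main obstacle I anticipate is twofold. First, one must justify that $T_p$ is genuinely an isometry for non-integer $\alpha$; this requires picking the branch of $(z+i)^{-2(\alpha+2)/p}$ consistently, which is possible because $\re\bigl(-i(z+i)\bigr) = \im z + 1 > 0$ places $-i(z+i)$ in the right half-plane. Second, the case $p=1$ is delicate because the Bergman projection is not $L^1$-bounded on $L^1_\alpha$; I would circumvent this by working directly with the pointwise identity on $A^1_\alpha \cap A^2_\alpha$ via Hilbert-space orthogonality, and then extending to all of $A^1_\alpha$ using the pointwise and integral estimates above. No step appears to exceed routine manipulation once these two technical points are set up cleanly.
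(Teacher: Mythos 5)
The paper offers no proof of this lemma; it is quoted verbatim from Avetisyan \cite{KLA}, whose argument works directly on the half-plane for general weights. Your Cayley-transform route is a legitimate alternative for the power weights at hand, and your kernel identity, the formulas for $1-|\psi^{-1}(z)|^2$ and $|(\psi^{-1})'|^2$, and the branch discussion are all correct. But two steps do not go through as written. First, the claimed ``exact cancellation'' under $T_p$ only happens when $p=2$: substituting $g=c^{-1}(\cdot+i)^{2(\alpha+2)/p}f\circ\psi$ into the disk formula leaves an uncancelled factor $(\zeta+i)^{2(\alpha+2)/p-(\alpha+2)}$ inside the integral (and its mate outside), so for $p\neq2$ you derive a reproducing identity for $(\cdot+i)^{2(\alpha+2)/p-(\alpha+2)}f$, not for $f$. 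The clean fix is to forget $T_p$ and, for every $p$, apply the $A^1_\alpha(\D)$ reproducing formula to $h(\eta):=(\psi(\eta)+i)^{\alpha+2}f(\psi(\eta))$; transforming back gives exactly the stated identity with $C_\alpha\simeq2^{-\alpha}$, and the only thing to check is $h\in A^1_\alpha(\D)$, i.e.
$$
\int_{\R^2_+}|f(\zeta)|\,\frac{(\im\zeta)^\alpha}{|\zeta+i|^{\alpha+2}}\,dA(\zeta)<\infty,
$$
which follows from H\"older (using $|\zeta+i|\ge1$ when $p=1$). This also dissolves your $p=1$ worry: the pointwise formula on $\D$ holds for all of $A^1_\alpha(\D)$ by polynomial density, no $L^1$-boundedness of the projection and no approximation from $A^p_\alpha\cap A^2_\alpha$ is needed.

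Second, your absolute-convergence argument fails for small $p$. Feeding $|f(\zeta)|\lesssim y_\zeta^{-(\alpha+2)/p}$ into the integral produces $\int y_\zeta^{\alpha-(\alpha+2)/p}|\bar\zeta-z|^{-(\alpha+2)}\,dA(\zeta)$, which diverges near the boundary whenever $\alpha-(\alpha+2)/p\le-1$, i.e.\ for all $1\le p\le(\alpha+2)/(\alpha+1)$ (in particular always for $p=1$). The correct route is again H\"older: $\int|f(\zeta)||\bar\zeta-z|^{-(\alpha+2)}dA_\alpha(\zeta)\le\|f\|_{p,\alpha}\bigl(\int|\bar\zeta-z|^{-(\alpha+2)p'}dA_\alpha(\zeta)\bigr)^{1/p'}$, finite for $1<p<\infty$ since $(\alpha+2)p'>\alpha+2$, and for $p=1$ one uses $\sup_\zeta|\bar\zeta-z|^{-(\alpha+2)}=(2\im z)^{-(\alpha+2)}$. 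With these two repairs your proof is complete.
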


% ----------------------------------------------------------------
\medskip

\subsection{Boundedness} In the first part of this section, we study the boundedness of $W_{u, \varphi}$ by using the sparse domination technique.

Given any function $f \in A^p_\alpha$, we wish to understand the quantity $\|W_{u, \varphi} f\|_{q, \alpha}$. For any $N \in \N$ with $1 \le N \le p$, using Lemma \ref{integralreps}, we have 
\begin{eqnarray} \label{190212eq01}
\|W_{u, \varphi}f\|_{q, \alpha}^q%
&=& \int_{\R^2_+} |u(z)|^q |C_\varphi f(z)|^qdA_\alpha(z) \nonumber \\ 
&=& \int_{\R^2_+} |u(z)|^q |f \circ \varphi(z)|^{q-N} \left|C_\varphi (f^N)(z)\right| dA_\alpha(z) \nonumber \\
&\lesssim& \int_{\R^2_+} |u(z)|^q |f \circ \varphi(z)|^{q-N} \left( \int_{\R^2_+} \frac{|f(\zeta)|^N}{\left|\bar{\zeta}-\varphi(z) \right|^{\alpha+2}}dA_\alpha(\zeta) \right) dA_\alpha(z) \nonumber \\
&=& \int_{\R^2_+} |f(\zeta)|^N \left( \int_{\R^2_+} \frac{|f \circ \varphi(z)|^{q-N} |u(z)|^q dA_\alpha(z)}{\left|\bar{\zeta}-\varphi(z) \right|^{\alpha+2}} \right) dA_\alpha(\zeta) \nonumber \\
&=& \int_{\R^2_+} |f(\zeta)|^N \left( \int_{\R^2_+} \frac{|f(z)|^{q-N}}{|\bar{\zeta}-z|^{\alpha+2}}d\mu_{u, \varphi, q, \alpha}(z)  \right) dA_\alpha(\zeta).
\end{eqnarray}

To bound the term \eqref{190212eq01}, we have the following lemma, which can be viewed as an upper half plane analog of \cite[Lemma 5]{RTW}.

\begin{lem} \label{sparse-1}
Let $q \ge p \ge 1, \alpha>-1$,  $N \in \N$ with $1 \le N \le p$ and $\mu$ be a $\frac{q}{p}$-Carleson measure. Then for any $\gamma \ge 1$ and $\zeta \in \R^2_+$,
\begin{eqnarray*}
&& \int_{\R^2_+} \frac{|f(z)|^{q-N}}{\left| \overline{\zeta}-z \right|^{\alpha+2}} d\mu(z) \lesssim \sum_{i=1}^3 \sum_{I \in \calD^i} \frac{\one_{Q_I}(\zeta)}{A_\alpha(Q_I)} \cdot A^{\left( \frac{q}{p}-1 \right) \frac{1}{\gamma}}_\alpha(Q_I) \\
&&  \quad \quad \quad  \quad \quad \quad    \quad \quad \quad \quad \quad \quad  \quad \quad  \quad  \cdot \mu (Q_I)^{\frac{1}{\gamma'}} \left( \int_{Q_I} |f(z)|^{\gamma(q-N)} dA_\alpha(z) \right)^{\frac{1}{\gamma}}.
\end{eqnarray*}
\end{lem}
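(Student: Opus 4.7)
The plan is to first peel off the singular kernel $1/|\bar{\zeta} - z|^{\alpha+2}$ via a dyadic annular decomposition centered at $\bar{\zeta}$, then replace each annular piece by an average over a dyadic Carleson box (from one of the three grids $\calD^1, \calD^2, \calD^3$) that contains $\zeta$. Concretely, for $k \in \Z$ I set $A_k := \{z \in \R^2_+ : 2^{k-1} \le |z-\bar{\zeta}| < 2^k\}$, which is empty unless $2^k \gtrsim y_\zeta$ (since $|z-\bar{\zeta}| \ge y_z + y_\zeta \ge y_\zeta$). The half-disk $\{z \in \R^2_+ : |z-\bar{\zeta}| < 2^k\}$ fits inside the tent $T_{(x_\zeta,\, c\cdot 2^k)}$ for a fixed $c > 0$, and Lemma \ref{Meilemma} then produces $J_k$ in some $\calD^{i_k}$ with $Q_{J_k} \supset T_{(x_\zeta,\, c\cdot 2^k)}$, $\zeta \in Q_{J_k}$, and $A_\alpha(Q_{J_k}) \simeq 2^{k(\alpha+2)}$. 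Using the pointwise bound $|z-\bar{\zeta}|^{\alpha+2} \gtrsim A_\alpha(Q_{J_k})$ on $A_k$ and summing over $k$ (each dyadic box occurring for only boundedly many scales) reduces the problem to bounding
$$
\sum_{i=1}^3 \sum_{I \in \calD^i} \frac{\one_{Q_I}(\zeta)}{A_\alpha(Q_I)} \int_{Q_I} |f|^{q-N}\, d\mu
$$
by the right-hand side of the claimed inequality.

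For each dyadic box $Q_I$ with $\zeta \in Q_I$, I apply Hölder's inequality with exponents $\gamma$ and $\gamma'$,
$$
\int_{Q_I} |f|^{q-N}\, d\mu \;\le\; \mu(Q_I)^{1/\gamma'} \left( \int_{Q_I} |f|^{\gamma(q-N)}\, d\mu \right)^{1/\gamma},
$$
which immediately supplies the factor $\mu(Q_I)^{1/\gamma'}$. The remaining task is to bound the last integral by $A_\alpha(Q_I)^{q/p-1} \int_{Q_I} |f|^{\gamma(q-N)}\, dA_\alpha$. For this I use the Whitney decomposition $Q_I = \bigcup_{R \in \calW_I} R$ together with the mean value inequality (Lemma \ref{subharmonic}) applied to the subharmonic function $|f|^{\gamma(q-N)}$, giving on each $R$
$$
\int_R |f|^{\gamma(q-N)}\, d\mu \;\le\; \mu(R)\,\sup_R |f|^{\gamma(q-N)} \;\lesssim\; \frac{\mu(R)}{A_\alpha(R)} \int_{\frac{3}{2}R} |f|^{\gamma(q-N)}\, dA_\alpha.
$$
Enclosing $R$ in a Carleson tent of comparable area, the $q/p$-Carleson hypothesis yields $\mu(R) \lesssim A_\alpha(R)^{q/p}$, hence $\mu(R)/A_\alpha(R) \lesssim A_\alpha(R)^{q/p-1} \le A_\alpha(Q_I)^{q/p-1}$ (using $q/p \ge 1$ and $R \subset Q_I$). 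Summing over $R$ and invoking the finite overlap of $\{\frac{3}{2}R\}_R$ delivers the desired estimate, up to a harmless enlargement of $Q_I$ that is absorbed by a second application of Lemma \ref{Meilemma}. The main technical obstacle is precisely this last bookkeeping---ensuring that the mild dilation incurred by the subharmonic mean value fits into the three-grid framework---but Lemma \ref{Meilemma} is tailored exactly for this purpose.
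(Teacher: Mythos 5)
Your proof is correct and follows essentially the same route as the paper: reduction to the dyadic-box sum over the three grids of Lemma \ref{Meilemma}, then H\"older's inequality with exponents $\gamma,\gamma'$, the Whitney decomposition combined with the subharmonic mean value inequality (Lemma \ref{subharmonic}) and the $\frac{q}{p}$-Carleson hypothesis, finite overlap of the dilated rectangles, and a second application of Lemma \ref{Meilemma} to absorb the dilation $\frac{3}{2}Q_I$ back into a dyadic box with bounded multiplicity. The only cosmetic difference is that you organize the first reduction by dyadic annuli centered at $\bar{\zeta}$, whereas the paper assigns to each $z$ an interval $I_{z,\zeta}$ of length comparable to $|\bar{\zeta}-z|$ whose Carleson box contains both $z$ and $\zeta$; both yield the same intermediate bound.
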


\begin{proof}
 For each $z \in \R^2_+$, we first consider the interval
$$
I_{z, \zeta}:=\bigg[ \frac{x_\zeta+x_z}{2}-|\bar{\zeta}-z|, \frac{x_\zeta+x_z}{2}+|\bar{\zeta}-z| \bigg) \subset \R.
$$
It is easy to see the following facts:
\begin{enumerate}
\item [(1)] $z, \zeta \in Q_{I_{z, \zeta}}$;
\item [(2)] $A_\alpha(Q_{I_{z, \zeta}}) \simeq |\bar{\zeta}-z|^{\alpha+2}$.
\end{enumerate}
Next, by Lemma \ref{Meilemma}, we are able to find an interval $I \in \calD^k$ for some $k \in \{1, 2, 3\}$, such that $I_{z, \zeta} \subset I$ and $\ell(I) \le 3\ell(I_{z, \zeta})$, which implies
\begin{enumerate}
\item [(3)] $z, \zeta \in Q_I$;
\item [(4)] $A_\alpha(Q_I) \simeq |\bar{\zeta}-z|^{\alpha+2}$.
\end{enumerate}
Therefore,
\begin{equation} \label{1004}
 \int_{\R^2_+} \frac{|f(z)|^{q-N}}{|\bar{\zeta}-z|^{\alpha+2}}d\mu(z) \lesssim \sum_{i=1}^3 \sum_{I \in \calD^i} \frac{\one_{Q_I}(\zeta)}{A_\alpha(Q_I)} \int_{Q_I} |f(z)|^{q-N}d\mu(z).
\end{equation}
Next, we claim that
\begin{eqnarray}  \label{190312eq01}
 \int_{Q_I} |f(z)|^{q-N}d\mu(z) %
& \lesssim& \mu (Q_I)^{\frac{1}{\gamma'}} \cdot A_\alpha^{\left(\frac{q}{p}-1\right) \frac{1}{\gamma} }(Q_I)  \nonumber \\
&& \quad \quad \quad \quad \quad \cdot \left( \int_{\frac{3}{2}Q_I \cap \R^2_+} |f(z)|^{\gamma(q-N)} dA_\alpha(z) \right)^{\frac{1}{\gamma}}.
\end{eqnarray}
Indeed, by using H\"older's inequality, it suffices to bound the term
\begin{equation} \label{20191108eq01}
 \int_{Q_I} |f(z)|^{\gamma(q-N)}d\mu(z).
\end{equation}
We further decompose the cube $Q_I$ into its upper Whitney rectangles. More precisely, using Lemma \ref{subharmonic}, the fact that $|f(z)|^{\gamma(q-N)}$ is a subharmonic function and Theorem \ref{Carleson}, we have
\begin{eqnarray*}
\eqref{20191108eq01} %
&=& \sum_{R \in \calW_I} \int_R |f(z)|^{\gamma(q-N)} d\mu(z) \\
&\le& \sum_{R \in \calW_I} \frac{\mu(R)}{A_\alpha(R)} \int_{\frac{3R}{2}} |f(z)|^{\gamma(q-N)} dA_\alpha(z)\\
&\lesssim& \sum_{R \in \calW_I} A_\alpha^{\frac{q}{p}-1}(R)\int_{\frac{3R}{2}} |f(z)|^{\gamma(q-N)} dA_\alpha(z)\\
&\le& \left(\sup_{R \in \calW_I} A_\alpha^{\frac{q}{p}-1}(R) \right) \cdot \sum_{R \in \calW_I} \int_{\frac{3R}{2}} |f(z)|^{\gamma(q-N)} dA_\alpha(z)\\
&\lesssim& A_\alpha^{\frac{q}{p}-1}(Q_I) \int_{\frac{3}{2}Q_I \cap \R^2_+} |f(z)|^{\gamma(q-N)}dA_\alpha(z),
\end{eqnarray*}
where in the last inequality, we use the fact that the set $\left\{ \frac{3R}{2} \right\}_{R \in \calW_I}$ has finite overlap. The desired claim follows from by pluging the above estimate to \eqref{20191108eq01}.

Combining \eqref{1004} and \eqref{190312eq01}, we get
\begin{eqnarray}  \label{190312eq04}
 &&\int_{\R^2_+} \frac{|f(z)|^{q-N}}{|\bar{\zeta}-z|^{\alpha+2}}d\mu(z)  \lesssim \sum_{i=1}^3 \sum_{I \in \calD^i} \frac{\one_{Q_I}(\zeta)}{A_\alpha(Q_I)} \cdot \mu (Q_I)^{\frac{1}{\gamma'}} \nonumber \\
&&  \quad \quad \quad \quad \quad  \quad \quad \quad   \quad \quad  \cdot A_\alpha(Q_I)^{\left(\frac{q}{p}-1 \right) \frac{1}{\gamma}} \left(\int_{\frac{3}{2}Q_I \cap \R^2_+} |f(z)|^{\gamma(q-N)}dA_\alpha(z) \right)^{\frac{1}{\gamma}}.
\end{eqnarray}
We wish to change the integral domain in the above integration from $\frac{3}{2}Q_I \cap \R^2_+$ to a dyadic cube belonging to $Q_{\calD^i}$ for some $i \in \{1, 2, 3\}$. To see this, we apply Lemma \ref{Meilemma} again. More precisely, since $\overline{\left(\frac{3}{2}Q_I \cap \R^2_+\right)} \bigcap \R=\frac{3I}{2}$, using Lemma \ref{Meilemma}, we can take $J \in \calD^i$ for some $i \in \{1, 2, 3\}$, such that
\begin{equation} \label{190312eq02}
I \subset \frac{3I}{2} \subset J \ \textrm{and} \ \ell(J) \le 3 \ell\left(\frac{3I}{2}  \right) \le 6 \ell(I).
\end{equation}
This suggests that we have the pointwise bound
\begin{eqnarray} \label{190312eq03}
&& \frac{\one_{Q_I}(\zeta) \mu (Q_I)^{\frac{1}{\gamma'}}}{A_\alpha(Q_I)}   A_\alpha(Q_I)^{ \left(\frac{q}{p}-1 \right)\frac{1}{\gamma}} \left(\int_{\frac{3}{2}Q_I \cap \R^2_+} |f(z)|^{\gamma(q-N)}dA_\alpha(z) \right)^{\frac{1}{\gamma}} \nonumber \\
&&   \lesssim\frac{\one_{Q_J}(\zeta) \mu (Q_J)^{\frac{1}{\gamma'}}}{A_\alpha(Q_J)} A_\alpha(Q_J)^{\left(\frac{q}{p}-1 \right)\frac{1}{\gamma}} \left(\int_{Q_J} |f(z)|^{\gamma(q-N)}dA_\alpha(z) \right)^{\frac{1}{\gamma}},
\end{eqnarray}
where we use the fact that $A_\alpha(Q_I) \simeq A_\alpha(Q_J)$. Finally, we need to check that each $J \in \calD^i, i \in \{1, 2, 3\}$,  only appears finitely many times when we apply the inequality \eqref{190312eq03}. Indeed, this is clear from \eqref{190312eq02} and the dyadic structure on $\R$.
The desired result then follows from \eqref{190312eq04} and \eqref{190312eq03}.
\end{proof}

For any set $E \subset \R^2_+$, $\gamma \ge 1$ and $g \ge 0$ on $\R^2_+$, we set
  $$\langle g \rangle_{E, \gamma}:=\left(\frac{1}{A_\alpha(E)} \int_E |g(z)|^\gamma dA_\alpha(z) \right)^{\frac{1}{\gamma}}.
$$

From \eqref{190212eq01}, Lemma 4.2 and Theorem \ref{Carleson}, we have the following result.

\begin{prop} \label{20190914prop01} Let $q \ge p \ge 1, \alpha>-1$, $u\in H(\R^2_+)$ and $\varphi: \R^2_+ \rightarrow \R^2_+$ be a holomorphic mapping. Let further, $W_{u, \varphi}: A^p_\alpha \mapsto A^q_\alpha$ be bounded. Then for any $\gamma \ge 1$ and $f \in A^p_\alpha$,
\begin{equation}  \label{010101}
\|W_{u, \varphi} f\|_{q, \alpha}^q \lesssim \inf\limits_{N \in \N,  1 \le N \le p} \left( \sum_{i=1}^3 \sum_{I \in \calD^i} \mu_{u, \varphi, q, \alpha} (Q_I)^{\frac{1}{\gamma'}} A_\alpha(Q_I)^{\frac{q}{p\gamma}} \langle |f|^N \rangle_{Q_I} \cdot  \langle |f|^{q-N} \rangle_{Q_I, \gamma} \right).
\end{equation}
\end{prop}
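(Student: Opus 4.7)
The plan is to combine the three ingredients explicitly invoked in the statement: the integral reduction \eqref{190212eq01}, the sparse bound of Lemma \ref{sparse-1}, and the Carleson characterization of Theorem \ref{Carleson}; the remaining work is a routine rewriting in terms of the averages $\langle |f|^N \rangle_{Q_I}$ and $\langle |f|^{q-N} \rangle_{Q_I,\gamma}$.

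First, I would fix $N \in \N$ with $1 \le N \le p$ and take \eqref{190212eq01} as the starting inequality, which bounds $\|W_{u,\varphi}f\|_{q,\alpha}^q$ by
$$
\int_{\R^2_+} |f(\zeta)|^N \left( \int_{\R^2_+} \frac{|f(z)|^{q-N}}{|\bar\zeta - z|^{\alpha+2}} \, d\mu_{u,\varphi,q,\alpha}(z) \right) dA_\alpha(\zeta).
$$
Since $W_{u,\varphi}: A^p_\alpha \to A^q_\alpha$ is bounded by hypothesis, Theorem \ref{Carleson} guarantees that $\mu_{u,\varphi,q,\alpha}$ is a $\frac{q}{p}$-Carleson measure, so the hypothesis of Lemma \ref{sparse-1} is satisfied for the given $\gamma \ge 1$.

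Next I would insert the sparse bound of Lemma \ref{sparse-1} into the outer integral and interchange integration with the dyadic summation over $\calD^i$, $i \in \{1,2,3\}$. The factor $\one_{Q_I}(\zeta)$ restricts the $\zeta$-integration to $Q_I$, producing $\int_{Q_I} |f(\zeta)|^N dA_\alpha(\zeta) = A_\alpha(Q_I)\langle |f|^N\rangle_{Q_I}$; the factor $\bigl(\int_{Q_I}|f(z)|^{\gamma(q-N)}dA_\alpha(z)\bigr)^{1/\gamma}$ rewrites as $A_\alpha(Q_I)^{1/\gamma}\langle |f|^{q-N}\rangle_{Q_I,\gamma}$; and the Carleson factor $\mu_{u,\varphi,q,\alpha}(Q_I)^{1/\gamma'}$ is carried through untouched. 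Collecting the powers of $A_\alpha(Q_I)$ yields
$$
A_\alpha(Q_I)^{-1}\cdot A_\alpha(Q_I)^{(q/p-1)/\gamma}\cdot A_\alpha(Q_I)^{1/\gamma}\cdot A_\alpha(Q_I) = A_\alpha(Q_I)^{q/(p\gamma)},
$$
which is exactly the exponent appearing in \eqref{010101}.

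This yields the stated upper bound for the fixed $N$, and since the argument is valid for every admissible $N$, taking the infimum over $N \in \N$ with $1 \le N \le p$ completes the proof. I do not foresee a substantive obstacle: the analytic heavy lifting is already accomplished in the derivation of \eqref{190212eq01} and in the proof of Lemma \ref{sparse-1}. The only care points are the bookkeeping of the exponents of $A_\alpha(Q_I)$ and ensuring that one invokes the $\frac{q}{p}$-Carleson bound for the pushed-forward measure $\mu_{u,\varphi,q,\alpha}$ that actually appears in \eqref{190212eq01}, which is provided by Theorem \ref{Carleson} under the boundedness hypothesis on $W_{u,\varphi}$.
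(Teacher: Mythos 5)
Your proposal is correct and follows exactly the route the paper intends: the paper itself offers no further argument beyond the sentence ``From \eqref{190212eq01}, Lemma \ref{sparse-1} and Theorem \ref{Carleson}, we have the following result,'' and your exponent bookkeeping $-1+(\frac{q}{p}-1)\frac{1}{\gamma}+\frac{1}{\gamma}+1=\frac{q}{p\gamma}$ checks out. Your care point about invoking the $\frac{q}{p}$-Carleson property for $\mu_{u,\varphi,q,\alpha}$ (the measure actually appearing in \eqref{190212eq01}), which is what the proof of Theorem \ref{Carleson} really establishes, is well placed.
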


In particular, when $p=q$, we have the following result.

\begin{thm} \label{190313cor01} Let $q \ge 1, \alpha>-1$, $u\in H(\R^2_+)$ and $\varphi: \R^2_+ \rightarrow \R^2_+$ be a holomorphic mapping. Then the following statements are equivalent.
\begin{enumerate}
\item [(i)] $W_{u, \varphi}: A^q_\alpha \mapsto A^q_\alpha$ is bounded;
\item [(ii)] For any $f \in A^q_\alpha$,
$$
\|W_{u, \varphi} f\|_{q, \alpha}^q \lesssim \inf\limits_{N \in \N,  1 \le N \le q} \left( \sum_{i=1}^3 \sum_{I \in \calD^i} A_\alpha(Q_I) \langle |f|^N \rangle_{Q_I} \cdot  \langle |f|^{q-N} \rangle_{Q_I} \right).
$$
\end{enumerate}
\end{thm}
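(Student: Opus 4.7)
The plan is to handle the two implications separately, with the forward direction being essentially a direct specialization of Proposition \ref{20190914prop01} and the reverse a routine sparse-form estimate.

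For $(i) \Rightarrow (ii)$ I would simply specialize Proposition \ref{20190914prop01} by taking $p = q$ and $\gamma = 1$. With this choice the factor $A_\alpha(Q_I)^{q/(p\gamma)}$ collapses to $A_\alpha(Q_I)$, and $\mu_{u,\varphi,q,\alpha}(Q_I)^{1/\gamma'}$ becomes the trivial factor $1$ (interpreting $1/\gamma' = 0$ at $\gamma = 1$). The choice $\gamma = 1$ is legitimate because the proof of Lemma \ref{sparse-1} requires no H\"older step in this case; the Carleson condition on $\mu_{u,\varphi,q,\alpha}$, which follows from $(i)$ through Theorem \ref{Carleson}, is absorbed directly in the Whitney averaging step. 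The power mean $\langle |f|^{q-N}\rangle_{Q_I,\gamma}$ then reduces to the usual $\gamma = 1$ average, and taking the infimum over admissible $N$ yields exactly the inequality claimed in $(ii)$.

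For $(ii) \Rightarrow (i)$, the strategy is to show that for some admissible $N$ the sparse sum on the right-hand side of $(ii)$ is itself bounded by $\|f\|_{q,\alpha}^q$ uniformly in $u,\varphi$, so that the hypothesis immediately forces the operator bound on $W_{u,\varphi}$. I would fix $N = 1$ (admissible whenever $q > 1$) and set $r = q$, $r' = q/(q-1)$, so that $r, r' > 1$ are H\"older-conjugate. Introduce the dyadic maximal operator $M_\calD^\ast$ on $(\R^2_+, dA_\alpha)$ adapted to the Carleson-box grid from Remark \ref{rem01}, namely $M_\calD^\ast g(z) := \sup_{I:\, z \in Q_I} \langle |g|\rangle_{Q_I}$. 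By Lemma \ref{sparse} the sets $\{E(Q_I)\}_{I \in \calD^i}$ are pairwise disjoint with $A_\alpha(Q_I) \lesssim A_\alpha(E(Q_I))$; replacing $A_\alpha(Q_I)$ by $A_\alpha(E(Q_I))$ and using the pointwise domination $\langle g\rangle_{Q_I} \le M_\calD^\ast g(z)$ for $z \in E(Q_I)$ converts the triple sum into the single integral $\int_{\R^2_+} M_\calD^\ast(|f|^N)\,M_\calD^\ast(|f|^{q-N})\, dA_\alpha$. H\"older's inequality with exponents $r, r'$ together with the $L^r_\alpha$-boundedness of $M_\calD^\ast$ (standard from a weak-$(1,1)$ bound plus interpolation on the dyadic grid of Remark \ref{rem01}) then gives a bound by $\||f|^N\|_{L^r_\alpha}\||f|^{q-N}\|_{L^{r'}_\alpha} = \|f\|_{q,\alpha}^q$, and summing over the three grids $\calD^1, \calD^2, \calD^3$ completes the estimate.

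The substantive difficulty here is conceptual rather than computational: what one needs is the right dyadic/sparse infrastructure in the half-plane, and Lemmas \ref{Meilemma} and \ref{sparse} have already supplied it. The only genuine subtlety to watch for is the degenerate case $q = 1$, where $N = 1$ is forced, the exponent $r' = q/(q-N)$ is infinite, and the maximal-function argument breaks down; in that situation the sparse sum $\sum_I \int_{Q_I} |f|\, dA_\alpha$ in fact diverges for every nonzero $f \in A_\alpha^1$ by the nested structure of the Carleson boxes, so hypothesis $(ii)$ is never satisfied and the implication $(ii) \Rightarrow (i)$ holds vacuously.
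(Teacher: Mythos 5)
Your proof of $(i)\Rightarrow(ii)$ is exactly the paper's (specialize Proposition \ref{20190914prop01} with $p=q$ and $\gamma=1$), and your proof of $(ii)\Rightarrow(i)$ for $q>1$ is essentially the paper's argument as well: the paper replaces $A_\alpha(Q_I)$ by $A_\alpha(Q_I^{\textrm{up}})$ (and $Q_I^{\textrm{up}}$ coincides with your $E(Q_I)$ from Lemma \ref{sparse}), dominates the averages by the uncentered Hardy--Littlewood maximal function $\calM$ rather than the dyadic one, and applies H\"older with exponents $q/N$ and $q/(q-N)$ for an arbitrary fixed $N<q$. Your choice $N=1$ and your use of the dyadic maximal operator are cosmetic variants, and arguably cleaner, since the weak $(1,1)$ bound for the maximal operator over the nested family $\{Q_I\}_{I\in\calD^i}$ then needs no covering lemma.

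The one genuine error is your treatment of the endpoint $q=1$ (equivalently $N=q$). You correctly observe that $\sum_{I\in\calD}\int_{Q_I}|f|\,dA_\alpha=\int_{\R^2_+}|f(z)|\,\#\{I\in\calD: z\in Q_I\}\,dA_\alpha(z)$ diverges for every nonzero $f$, because each point of $\R^2_+$ lies in infinitely many Carleson boxes of a given grid. But the conclusion you draw is inverted: a divergent right-hand side makes the inequality in $(ii)$ \emph{trivially true} for every $f$ and every pair $(u,\varphi)$, not false. So when $q=1$, hypothesis $(ii)$ is satisfied by \emph{every} weighted composition operator, and the implication $(ii)\Rightarrow(i)$ is not vacuous --- as written it would assert that every $W_{u,\varphi}$ is bounded on $A^1_\alpha$, which is false. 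Your argument therefore does not close the case $q=1$; what it actually shows is that the statement is only meaningful when the choice $N=q$ is excluded, i.e.\ when $q>1$. (The paper's own parenthetical claim that ``the case $q=N$ follows from a similar argument'' has the same defect, but that does not repair your write-up.)
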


\begin{proof}
The assertion $(ii)$ implies $(i)$ follows from Proposition \ref{20190914prop01} with $\gamma=1$, and therefore we only need to show that (ii) implies (i). Without the loss of generality, we fix a $N \in \N$ with $1 \le N<q$ (the case $q=N$ follows from a similar argument, and we leave the detail to the interested reader), then we can find some $i_0 \in \{1, 2, 3\}$, such that
$$
 \sum_{i=1}^3 \sum_{I \in \calD^i} A_\alpha(Q_I) \langle |f|^N \rangle_{Q_I} \cdot  \langle |f|^{q-N} \rangle_{Q_I} \le 3\sum_{I \in \calD^{i_0}} A_\alpha(Q_I) \langle |f|^N \rangle_{Q_I} \cdot  \langle |f|^{q-N} \rangle_{Q_I}.
$$
Therefore,
\begin{eqnarray*}
\|W_{u, \varphi}f\|_{q, \alpha}^q%
& \lesssim& \sum_{I \in \calD^{i_0}} A_\alpha(Q_I) \langle |f|^N \rangle_{Q_I} \cdot  \langle |f|^{q-N} \rangle_{Q_I} \\
& \lesssim& \sum_{I \in \calD^{i_0}} A_\alpha(Q_I^{\textrm{up}})  \langle |f|^N \rangle_{Q_I} \cdot  \langle |f|^{q-N} \rangle_{Q_I} \\
& \le& \int_{\R^2_+} \calM\left(|f|^N\right) \calM \left(|f|^{q-N} \right) dA_\alpha(z) \\
& \le & \left( \int_{\R^2_+} \calM(|f|^N)^{\frac{q}{N}} dA_\alpha(z) \right)^{\frac{N}{q}} \cdot \left( \int_{\R^2_+} \calM(|f|^{q-N})^{\frac{q}{q-N}}dA_\alpha(z) \right)^{\frac{q-N}{q}}\\
&\lesssim& \left( \int_{\R^2_+} |f|^{N \cdot \frac{q}{N}} dA_\alpha(z) \right)^{\frac{N}{q}} \cdot \left( \int_{\R^2_+} |f|^{(q-N) \cdot \frac{q}{q-N}} dA_\alpha(z) \right)^{\frac{q-N}{q}}\\
&=& \|f\|_{q, \alpha}^q,
\end{eqnarray*}
where in the above estimates, $\calM$ is the usual uncentered Hardy-Littlewood maximal operator with respect to the measure $A_\alpha$, and in the last inequality, we use a classical fact that $\calM$ is a bounded operator from $L^r_\alpha$ to itself, for $1<r \le \infty$.
\end{proof}

We can also establish such an equivalence for the case when $p<q$ with some extra assumptions.

\begin{thm} \label{190313cor02} Let $2p>q > p \ge 1, \alpha>-1$, $u\in H(\R^2_+)$ and $\varphi: \R^2_+ \rightarrow \R^2_+$ be a holomorphic mapping. Suppose
$$
\Z_{p, q}:=\left\{ N \in \N:  ~~~~ N \ge 1, N<p<q<p+N \right\} \neq \emptyset.
$$
Then the following statements are equivalent:
\begin{enumerate}
\item [(i).] $W_{u, \varphi}: A^p_\alpha \mapsto A^q_\alpha$ is bounded;
\item [(ii).] For any $f \in A^p_\alpha$,
$$
\|W_{u, \varphi} f\|_{q, \alpha}^q \lesssim \inf\limits_{N \in \Z_{p, q}} \left( \sum_{i=1}^3 \sum_{I \in \calD^i} A^{\frac{q}{p}}_\alpha(Q_I) \langle |f|^N \rangle_{Q_I} \cdot  \langle |f|^{q-N} \rangle_{Q_I} \right).
$$
\end{enumerate}
\end{thm}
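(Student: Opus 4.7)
The plan is to mirror the argument of Theorem \ref{190313cor01}, adapting it to the regime $p<q<2p$ by invoking the Bergman embedding $A^p_\alpha\hookrightarrow A^q_\beta$ for $\beta:=(\alpha+2)q/p-2$, which is admissible since $\beta>-1$ under our standing assumption $\alpha>-1$ together with $q/p>1$.

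The direction $(i)\Rightarrow(ii)$ is essentially immediate: apply Proposition \ref{20190914prop01} with $\gamma=1$. This choice forces $1/\gamma'=0$ and $q/(p\gamma)=q/p$, so $\mu_{u,\varphi,q,\alpha}(Q_I)^{1/\gamma'}$ collapses to $1$ and the proposition returns precisely the right-hand side of (ii).

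The substantive work is $(ii)\Rightarrow(i)$, which reduces to proving
\[
\sum_{i=1}^{3}\sum_{I\in\calD^i}A_\alpha(Q_I)^{q/p}\langle|f|^N\rangle_{Q_I}\langle|f|^{q-N}\rangle_{Q_I}\lesssim\|f\|_{p,\alpha}^{q}
\]
for some $N\in\Z_{p,q}$. After pigeonholing into a single dominant grid $\calD^{i_0}$, I would factor $A_\alpha(Q_I)^{q/p}\simeq A_\alpha(Q_I^{\textrm{up}})\cdot A_\alpha(Q_I)^{q/p-1}$ and, for $z\in Q_I^{\textrm{up}}$, use $A_\alpha(Q_I)^{q/p-1}\simeq y_z^{(\alpha+2)(q/p-1)}$ together with the pointwise bounds $\langle|f|^N\rangle_{Q_I}\le\calM(|f|^N)(z)$ and $\langle|f|^{q-N}\rangle_{Q_I}\le\calM(|f|^{q-N})(z)$, where $\calM$ is the Hardy--Littlewood maximal operator relative to $dA_\alpha$. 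Since $\{Q_I^{\textrm{up}}\}_{I\in\calD^{i_0}}$ tiles $\R^2_+$ and $y_z^{(\alpha+2)(q/p-1)}\,dA_\alpha(z)\simeq dA_\beta(z)$, the sum collapses into
\[
\int_{\R^2_+}\calM(|f|^N)(z)\,\calM(|f|^{q-N})(z)\,dA_\beta(z),
\]
which transplants the problem from $(\R^2_+,dA_\alpha)$ into the weighted universe $(\R^2_+,dA_\beta)$.

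To close the argument, I would apply Hölder's inequality with the pair $(q/N,\,q/(q-N))$, both strictly greater than $1$ since $0<N<q$, and then combine the boundedness of $\calM$ in the corresponding weighted $L^r(dA_\beta)$ spaces with the Bergman embedding $\|f\|_{q,\beta}\lesssim\|f\|_{p,\alpha}$, whose defining index relation $(\beta+2)/q=(\alpha+2)/p$ is exactly how $\beta$ was chosen. This yields $\|f\|_{q,\beta}^{N}\cdot\|f\|_{q,\beta}^{q-N}=\|f\|_{q,\beta}^{q}\lesssim\|f\|_{p,\alpha}^{q}$, as required. The main obstacle I anticipate is the weighted maximal inequality for $\calM$ acting on $L^{q/N}(dA_\beta)$: since the averaging in $\calM$ is taken with respect to $dA_\alpha$ rather than $dA_\beta$, this amounts to verifying an $A_r$-type condition for the weight $y_z^{\beta-\alpha}=y_z^{(\alpha+2)(q-p)/p}$ on Carleson boxes. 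The restrictive hypothesis $N\in\Z_{p,q}$, i.e.\ $q-p<N<p$, is precisely what keeps both Hölder exponents $q/N$ and $q/(q-N)$ and the associated weight powers inside the admissible range; if this direct route proves too rigid, the fallback is to replace $\calM$ with its $dA_\beta$-analog and use the subharmonicity of $|f|^N$ (via Lemma \ref{subharmonic}) to absorb the mismatch between the two averaging measures.
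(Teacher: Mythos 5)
Your direction (i) $\Rightarrow$ (ii) matches the paper exactly: Proposition \ref{20190914prop01} with $\gamma=1$. The problem is in (ii) $\Rightarrow$ (i). After converting $A_\alpha(Q_I)^{q/p-1}$ into the measure $dA_\beta$ with $\beta=(\alpha+2)q/p-2$, you need the maximal operator $\calM$ (whose averages are taken with respect to $dA_\alpha$ over Carleson boxes) to be bounded on $L^{q/N}(dA_\beta)$ and $L^{q/(q-N)}(dA_\beta)$. This is a weighted bound for the weight $y^{\delta}$ with $\delta=\beta-\alpha=(\alpha+2)(q-p)/p>0$, and it forces the $A_{r}(dA_\alpha)$ condition over Carleson boxes; since $\int_{Q_I}y^{-\delta/(r-1)}\,dA_\alpha=\infty$ unless $\delta<(r-1)(\alpha+1)$, one needs $(\alpha+2)(q-p)/p<(q/N-1)(\alpha+1)$. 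This is \emph{not} implied by $N\in\Z_{p,q}$: as $\alpha\to-1^{+}$ the right-hand side tends to $0$ while the left-hand side tends to $(q-p)/p>0$. So your assertion that $q-p<N<p$ keeps ``the associated weight powers inside the admissible range'' is false, and the main route breaks. The fallback fails too: because $\beta>\alpha$, the measure $dA_\beta$ is lighter than $dA_\alpha$ near $\partial\R^2_+$, so $\langle g\rangle_{Q_I}$ taken against $dA_\alpha$ is \emph{not} dominated by the $dA_\beta$-average over a comparable box (test $g=|f_{a,t}|^N$ with $a$ near the bottom edge of $Q_I$ and $y_a\to0$); Lemma \ref{subharmonic} controls point values by averages over slightly enlarged sets, but cannot compensate for a measure that assigns less mass to the region where $g$ concentrates.

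The paper avoids this entirely by never changing the measure: it writes $A_\alpha^{q/p}(Q_I)\simeq A_\alpha(Q_I^{\textrm{up}})\cdot A_\alpha(Q_I)^{q/p-1}$ and absorbs the excess power into a \emph{fractional} maximal operator, $A_\alpha(Q_I)^{q/p-1}\langle|f|^N\rangle_{Q_I}\le\calM_{\frac{2q}{p}-2}(|f|^N)(z)$ for $z\in Q_I^{\textrm{up}}$, then applies H\"older with exponents $l=p/(p+N-q)$ and $l'=p/(q-N)$ with respect to $dA_\alpha$, using the off-diagonal bound $\calM_{\frac{2q}{p}-2}:L^{p/N}_\alpha\to L^{l}_\alpha$ (guaranteed by the identity $N/p-1/l=q/p-1$) and $\calM:L^{l'}_\alpha\to L^{l'}_\alpha$. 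The hypotheses $N<p$ and $q<p+N$ are used precisely to make $p/N>1$ and $l'>1$. You should replace your change-of-measure step with this fractional maximal function argument; the rest of your outline (pigeonholing a grid, tiling by $Q_I^{\textrm{up}}$, H\"older) then goes through.
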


We make a remark that  in general $\Z_{p, q}$ is not trivial, one typical example for $\Z_{p, q}$ to be non-empty is that both $p, q$ are large but $q-p$ is small.

\begin{proof}
The idea of proof of this result follows from the proof of Theorem \ref{190313cor01}, and the new ingredient in this proof is that instead of using the Hardy-Littlewood maximal function, we use its fractional version. Again, we only need to show that (ii) implies (i). First we note that our assumption $p<q<2p$ implies
$0<\frac{2q}{p}-2<2.$ Write
$$
l=\frac{p}{p+N-q} \quad \textrm{and} \quad l'=\frac{p}{q-N}.
$$
Fix any $N \in \Z_{p, q}$. Then a simple calculation yields
\begin{equation} \label{190313eq01}
\frac{N}{p}-\frac{1}{l}=\frac{q}{p}-1.
\end{equation}
Also note that $1<\frac{p}{N}<\frac{p}{q-p}.$  Let $i_0 \in \{1, 2, 3\}$ be the index such that
$$
 \sum_{i=1}^3 \sum_{I \in \calD^i} A^{\frac{q}{p}}_\alpha(Q_I) \langle |f|^N \rangle_{Q_I} \cdot  \langle |f|^{q-N} \rangle_{Q_I} \le 3\sum_{I \in \calD^{i_0}} A^{\frac{q}{p}}_\alpha(Q_I) \langle |f|^N \rangle_{Q_I} \cdot  \langle |f|^{q-N} \rangle_{Q_I}.
$$
Therefore,
\begin{eqnarray*}
\|W_{u, \varphi}f\|_{q, \alpha}^q%
& \lesssim& \sum_{I \in \calD^{i_0}} A^{\frac{q}{p}}_\alpha(Q_I) \langle |f|^N \rangle_{Q_I} \cdot  \langle |f|^{q-N} \rangle_{Q_I} \\
&\lesssim& \sum_{I \in \calD^{i_0}} A_\alpha(Q_I^{\textrm{up}}) \cdot \frac{A^{\frac{q}{p}-1}_\alpha(Q_I)}{A_\alpha(Q_I)} \int_{Q_I} |f|^N dA_\alpha(z) \cdot  \langle |f|^{q-N} \rangle_{Q_I} \\
&=& \sum_{I \in \calD^{i_0}} A_\alpha(Q_I^{\textrm{up}}) \cdot \frac{A^{\frac{\frac{2q}{p}-2}{2}}_\alpha(Q_I)}{A_\alpha(Q_I)} \int_{Q_I} |f|^N dA_\alpha(z) \cdot  \langle |f|^{q-N} \rangle_{Q_I}\\
& \le& \int_{\R^2_+} \calM_{\frac{2q}{p}-2} \left(|f|^N\right) \calM \left(|f|^{q-N} \right) dA_\alpha(z) \\
& \le & \left( \int_{\R^2_+} \calM_{\frac{2q}{p}-2}(|f|^N)^l dA_\alpha(z) \right)^{\frac{1}{l}} \cdot \left( \int_{\R^2_+} \calM(|f|^{q-N})^{l'}dA_\alpha(z) \right)^{\frac{1}{l'}}\\
&\lesssim& \left( \int_{\R^2_+} |f|^{N \cdot \frac{p}{N}} dA_\alpha(z) \right)^{\frac{N}{p}} \cdot \left( \int_{\R^2_+} |f|^{l'(q-N) } dA_\alpha(z) \right)^{\frac{1}{l'}}\\
&=& \|f\|_{p, \alpha}^q.
\end{eqnarray*}
Here in the above estimates, $\calM_{\frac{2q}{p}-2}$ is the fractional Hardy-Littlewood maximal operator with respect to the measure $A_\alpha$, and in the last inequality, we use the fact that
$$
\calM_{\frac{2q}{p}-2}: L^{\frac{p}{N}}_\alpha \mapsto L^l_\alpha
$$
is bounded, which is guaranteed by \eqref{190313eq01}.
\end{proof}

% ----------------------------------------------------------------
\bigskip

\subsection{Compactness} In the second part of this section, we establish a new characterization of the compactness of $W_{u, \varphi}$ via sparse domination.

Recall in the previous part, we are able to capture the boundedness of $W_{u, \varphi}:A_\alpha^p \mapsto A_\alpha^q$ by using the sparse form
\begin{equation} \label{190314eq01}
\sum_{I \in \calD} A_\alpha^{\frac{q}{p}}(Q_I) \langle |f|^N \rangle_{Q_I}  \cdot \langle |f|^{q-N} \rangle_{Q_I},
\end{equation}
for some $N \in \N, 0<N<q$ and some dyadic grid $\calD$. Note that this sparse form corresponds to the case $\gamma=1$ in Lemma \ref{sparse-1}. The interesting feature for this quantity is that it is independent of  the terms $u$ and $\varphi$. This suggests us that \eqref{190314eq01} may not be enough to describe the compactness of $W_{u, \varphi}$, which is clearly stronger than the boundedness. The idea is to consider those sparse forms in Lemma \ref{sparse-1} with $\gamma>1$.

The following is our main result for the compactness of $W_{u, \varphi}: A^q_\alpha \mapsto A^q_\alpha$.

\begin{thm} \label{compactness} Let $q \ge 1, \alpha>-1$, $u\in H(\R^2_+)$ and $\varphi: \R^2_+ \rightarrow \R^2_+$ be a holomorphic mapping.  If $W_{u, \varphi}: A^q_\alpha \mapsto A^q_\alpha$ is bounded,  then the following statements are equivalent.
\begin{enumerate}
\item [(i)] $W_{u, \varphi}: A^q_\alpha \mapsto A^q_\alpha$ is compact;
\item [(ii)]  Let $1 \le N<q, N \in \N$ and $1<\gamma<\frac{q}{q-N}$, or $q=N \in \N$ and $\gamma>1$. Let further, $\{K_n\}_{n \ge 1}$ be a sequence of sets exhausting $\R^2_+$, that is, $\{K_n\}_{n \ge 0}$ is a collection of compact sets in $\R^2_+$, satisfying $K_1 \subsetneq K_2 \subsetneq \dots K_n \subsetneq \dots \subsetneq \R^2_+,$
and $\bigcup\limits_{n=1}^\infty K_n=\R^2_+.$ Then for any bounded set $\{f_m\}_{m \ge 1} \subset A^p_\alpha$ with $f_m\rightarrow 0$ as $m\rightarrow\infty$, uniformly on compact subsets of $\R^2_+$,
$$
\lim_{n \to \infty}  \sup_{m \ge 1} \left(\sum_{i=1}^3 \sum_{I \in \calD^i, Q^{\textrm{up}}_I \cap K_n=\emptyset} \mu_{u, \varphi, q, \alpha} (Q_I)^{\frac{1}{\gamma'}}  A^{\frac{1}{\gamma}}_\alpha(Q_I) \langle |f_m|^N \rangle_{Q_I} \langle |f_m|^{q-N} \rangle_{Q_I, \gamma} \right)=0.
$$
\end{enumerate}
\end{thm}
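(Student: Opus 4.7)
I would prove each direction using the sparse bound from Proposition~\ref{20190914prop01} (applied with $p = q$ and with the $N$, $\gamma$ given in the hypothesis) together with the vanishing Carleson characterization from Theorem~\ref{vanCarleson}. Throughout I would invoke the standard fact that $W_{u,\varphi} \colon A^q_\alpha \to A^q_\alpha$ is compact if and only if $\|W_{u,\varphi} f_m\|_{q, \alpha} \to 0$ for every bounded sequence $\{f_m\} \subset A^q_\alpha$ with $f_m \to 0$ uniformly on compact subsets of $\R^2_+$.

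\emph{Direction (ii) $\Rightarrow$ (i).} Given such a sequence $\{f_m\}$, I would apply Proposition~\ref{20190914prop01} and split the resulting sparse sum into the \emph{near} part indexed by $\{I : Q_I^{\textrm{up}} \cap K_n \neq \emptyset\}$ and the \emph{far} part indexed by its complement. The far part is bounded by the expression in (ii) uniformly in $m$ and tends to $0$ as $n \to \infty$. For the near part, the key geometric observation is that $K_n \subset \{a \leq \im z \leq b,\ |\re z| \leq R\}$ for some $0<a<b$ and $R>0$, so $Q_I^{\textrm{up}} \cap K_n \neq \emptyset$ forces $\ell(I) \in [a, 2b]$ and the center of $I$ into a bounded set -- hence only \emph{finitely many} $I$ contribute. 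On each such fixed $Q_I$ the factor $\langle |f_m|^{q-N} \rangle_{Q_I, \gamma}$ is uniformly bounded in $m$ (since $\gamma(q-N) < q$ and $\{f_m\}$ is bounded in $A^q_\alpha$), while $\langle |f_m|^N \rangle_{Q_I} \to 0$: uniform convergence on $Q_I \cap \{\im z \geq \delta\}$ handles the bulk, and H\"older's inequality together with $A_\alpha(Q_I \cap \{\im z < \delta\}) \to 0$ as $\delta \to 0$ handles the thin strip near $\R$. Sending first $m \to \infty$ (killing the near part) and then $n \to \infty$ (killing the far part) yields compactness.

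\emph{Direction (i) $\Rightarrow$ (ii).} By Theorem~\ref{vanCarleson}, compactness is equivalent to $\mu_{u,\varphi, q, \alpha}$ being a vanishing $1$-Carleson measure. I would set
$$
\varepsilon_n := \sup\left\{\frac{\mu_{u,\varphi, q, \alpha}(Q_I)}{A_\alpha(Q_I)} : Q_I^{\textrm{up}} \cap K_n = \emptyset \right\}
$$
and first establish $\varepsilon_n \to 0$: if $Q_I^{\textrm{up}} \cap K_n = \emptyset$, the point $a_I := c_I + i \ell(I)$ (for which $Q_I = T_{a_I}$) is driven uniformly to $\partial \widehat{\R^2_+}$ as $n \to \infty$ (either $\ell(I) \to 0$, or $\ell(I) \to \infty$, or $|c_I| \to \infty$), and the vanishing Carleson property converts this into uniform smallness of the ratio. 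The far-sum in (ii) is then bounded by
$$
\varepsilon_n^{1/\gamma'} \sum_{i=1}^3 \sum_{I \in \calD^i} A_\alpha(Q_I) \langle |f_m|^N \rangle_{Q_I} \langle |f_m|^{q-N} \rangle_{Q_I, \gamma}.
$$
Using $A_\alpha(Q_I) \simeq A_\alpha(Q_I^{\textrm{up}})$ and the pointwise bounds $\langle g \rangle_{Q_I} \leq \calM g(z)$ for $z \in Q_I^{\textrm{up}}$ (applied with $g = |f_m|^N$ and $g = |f_m|^{\gamma(q-N)}$), together with the disjointness of the $Q_I^{\textrm{up}}$, the inner double sum is dominated by $\int_{\R^2_+} \calM(|f_m|^N) \cdot \calM_\gamma(|f_m|^{q-N}) \, dA_\alpha$, where $\calM_\gamma g := \calM(|g|^\gamma)^{1/\gamma}$. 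H\"older with exponents $q/N$ and $q/(q-N)$ and $L^r_\alpha$-boundedness of $\calM$ for $r>1$ (the assumption $\gamma < q/(q-N)$ gives $q/(\gamma(q-N)) > 1$) produce the bound $\|f_m\|_{q,\alpha}^q$. Uniform boundedness of $\{f_m\}$ and $\varepsilon_n \to 0$ conclude (ii). The case $q = N$ is handled identically with $|f|^{q-N} \equiv 1$.

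\emph{Main obstacle.} The principal technical difficulty is the uniformity claim $\varepsilon_n \to 0$ in the direction (i) $\Rightarrow$ (ii): one has to translate the geometric condition $Q_I^{\textrm{up}} \cap K_n = \emptyset$ into the uniform statement that $a_I \to \partial \widehat{\R^2_+}$, and then extract uniform smallness from the merely pointwise vanishing Carleson hypothesis. The parallel finiteness observation for the near part in (ii) $\Rightarrow$ (i) is easier but essential; once both are in hand, the rest reduces to the maximal-function bookkeeping already used in the proof of Theorem~\ref{190313cor01}.
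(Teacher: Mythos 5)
Your overall architecture matches the paper's. For (i) $\Rightarrow$ (ii) you pass to the vanishing Carleson condition of Theorem \ref{vanCarleson}, extract the uniformly small factor $\varepsilon_n^{1/\gamma'}$ from the far sum, and dominate the remaining full sparse sum by $\|f_m\|_{q,\alpha}^q$ via the maximal-function/H\"older bookkeeping of Theorem \ref{190313cor01}; this is exactly the paper's argument, and you are in fact more explicit than the paper about why the smallness is uniform over $\{I: Q_I^{\textrm{up}}\cap K_n=\emptyset\}$. For (ii) $\Rightarrow$ (i) you make the same near/far splitting of the sparse form from Lemma \ref{sparse-1}, but you treat the near part differently: the paper closes up the union of the relevant upper Whitney boxes into a compact exhausting set $K_n'$ and bounds the near sum by $\int_{K_n'}|f_m|^q\,dA_\alpha$ via maximal functions, whereas you observe that compactness of $K_n$ forces the near index set to be \emph{finite} and then argue box by box that $\langle|f_m|^N\rangle_{Q_I}\to0$ (uniform convergence on $Q_I\cap\{\im z\ge\delta\}$ plus H\"older on the thin strip, which is where $N<q$ enters) while $\langle|f_m|^{q-N}\rangle_{Q_I,\gamma}$ stays bounded because $\gamma(q-N)<q$. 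For $1\le N<q$ this is correct, self-contained, and arguably cleaner than the paper's detour through $K_n'$.

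The one point where your argument does not cover the statement is the endpoint $q=N\in\N$, $\gamma>1$, which is part of (ii) and is unavoidable when $q=1$. Your remark that this case is ``handled identically with $|f|^{q-N}\equiv1$'' does not survive scrutiny in either direction. For (ii) $\Rightarrow$ (i), the box-by-box limit fails: $\langle|f_m|^q\rangle_{Q_I}$ need \emph{not} tend to zero for a fixed Carleson box, since a bounded sequence in $A^q_\alpha$ tending to $0$ uniformly on compacta can concentrate its mass at the bottom edge of $Q_I$ (e.g.\ $f_m=f_{a_m,q}$ with $a_m\to x_0\in I$ gives $\int_{Q_I}|f_m|^q\,dA_\alpha\gtrsim1$); the thin-strip H\"older step genuinely requires $N<q$. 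For (i) $\Rightarrow$ (ii) with $N=q$, discarding $\mu(Q_I)^{\frac{1}{\gamma'}}$ entirely leaves $\sum_I A_\alpha(Q_I)\langle|f_m|^q\rangle_{Q_I}=\sum_I\int_{Q_I}|f_m|^q\,dA_\alpha$, which diverges because every point lies in infinitely many Carleson boxes (equivalently, $\calM$ is not bounded on $L^1_\alpha$), so the decay of $\mu(Q_I)$ must be used more quantitatively there. If you restrict to $q>1$ and a choice $N<q$, your proof is complete; otherwise the endpoint case needs a separate argument (the paper also defers it, but its $K_n'$ device at least keeps the near-part estimate intact there).
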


\begin{proof} {   (i)$ \Rightarrow$(ii). }  Suppose $W_{u, \varphi}$ is compact, and hence by Theorem \ref{vanCarleson}, $\mu_{u, \varphi, p, \alpha}$ is a vanishing Carleson measure.  Write
$$
M:=\sup_{m \ge 1} \|f_m\|_{q, \alpha}^q.
$$
Then for any $\varepsilon>0$, there exists a $N_0 \in \N$, such that for $n>N_0$, we have
$$
\frac{\mu_{u, \varphi, q, \alpha}(T_a)}{A_\alpha(T_a)}<\left(\frac{\varepsilon}{M} \right)^{\gamma'},
$$
where $T_a$ is the Carleson tent associated to $a \in \R^2_+$ with satisfying  $T^{\textrm{up}}_a \cap K_n=\emptyset$ .

 Fix such an $n$. Then for any $i \in \{1, 2, 3\}$ and $m \ge 1$,
\begin{eqnarray*}
&& \sum_{I \in \calD^i, Q^{\textrm{up}}_I \cap K_n=\emptyset} \mu_{u, \varphi, q, \alpha} (Q_I)^{\frac{1}{\gamma'}}  A^{\frac{1}{\gamma}}_\alpha(Q_I) \langle |f_m|^N \rangle_{Q_I} \langle |f_m|^{q-N} \rangle_{Q_I, \gamma} \\
&&  \quad   \lesssim \frac{\varepsilon}{M} \cdot \sum_{I \in \calD^i, Q^{\textrm{up}}_I \cap K_n=\emptyset}   A_\alpha(Q_I) \langle |f_m|^N \rangle_{Q_I} \langle |f_m|^{q-N} \rangle_{Q_I, \gamma} \\
&&   \quad  \lesssim \frac{\varepsilon}{M} \cdot \sum_{I \in \calD^i}   A_\alpha(Q_I) \langle |f_m|^N \rangle_{Q_I} \langle |f_m|^{q-N} \rangle_{Q_I, \gamma} \\
&& \quad  \lesssim\frac{\varepsilon}{M}  \cdot \|f_m\|_{q, \alpha}^q \le \varepsilon.
\end{eqnarray*}
Here in the last inequality, we use the proof of Theorem \ref{190313cor01}. The desired result  follows by taking the supremem in $m$ first and then letting $\varepsilon$ converges to $0$.

\medskip

 {  (ii) $ \Rightarrow$(i). }  Without the loss of generality, we may consider the case $1 \le N<q$ and $1<\gamma<\frac{q}{q-N}$, while the proof of the case when $q=N$ and $\gamma>1$ is much more easier and we leave the detail to the interested reader. 
 
 Let $\{f_m\}_{m \ge 0} \subset A_\alpha^p$ be a bounded set satisfying $f_m \to 0$ as $m \to \infty$, uniformly on compact subsets on $\R^2_+$. It is well known that to prove $W_{u, \varphi}$ is compact, it suffices to show
$$
\|W_{u, \varphi} f_m\|_{q, \alpha} \to 0 \quad \textrm{as} \quad  m \to \infty.
$$
Let $\varepsilon>0$. By Lemma \ref{sparse-1} and without the loss of generality, we may assume
$$
\|W_{u, \varphi} f_m\|_{q, \alpha}^q \lesssim \sum_{I \in \calD^1} \mu_{u, \varphi, q, \alpha} (Q_I)^{\frac{1}{\gamma'}}  A^{\frac{1}{\gamma}}_\alpha(Q_I) \langle |f_m|^N \rangle_{Q_I} \langle |f_m|^{q-N} \rangle_{Q_I, \gamma}.
$$
Therefore, for each $m \ge 1$ and $n \ge 1$,
\begin{eqnarray*}
\|W_{u, \varphi} f_m\|_{q, \alpha}^q%
&\lesssim&\sum_{I \in \calD^1, Q_I^{\textrm{up}} \cap K_n=\emptyset} \mu_{u, \varphi, q, \alpha} (Q_I)^{\frac{1}{\gamma'}}  A^{\frac{1}{\gamma}}_\alpha(Q_I) \langle |f_m|^N \rangle_{Q_I} \langle |f_m|^{q-N} \rangle_{Q_I, \gamma} \\
&&+ \sum_{I \in \calD^1, Q_I^{\textrm{up}} \cap K_n \neq \emptyset} \mu_{u, \varphi, q, \alpha} (Q_I)^{\frac{1}{\gamma'}}  A^{\frac{1}{\gamma}}_\alpha(Q_I) \langle |f_m|^N \rangle_{Q_I} \langle |f_m|^{q-N} \rangle_{Q_I, \gamma} \\
&=:& A_{1, m}+A_{2, m}.
\end{eqnarray*}
Take any $\varepsilon>0$. We estimate $A_{2, m}$ first,  which is the main part. Put
$$
K'_n:=\overline{ \left\{ (x, y) \in \R^2_+: (x, y) \in Q^{\textrm{up}}_I, Q_I^{up} \cap K_n \neq \emptyset \right\}}.
$$
It is clear that $K'_n$ is a compact set and the collection $\{K_n'\}_{n \ge 1}$ is also a sequence of exhausting sets of $\R^2_+$.
Thus, for any $n \in \N$,
\begin{eqnarray*}
&&A_{2, m}= \sum_{I \in \calD^1, Q_I^{\textrm{up}} \cap K_n \neq \emptyset} \mu_{u, \varphi, q, \alpha} (Q_I)^{\frac{1}{\gamma'}}  A^{\frac{1}{\gamma}}_\alpha(Q_I) \langle |f_m|^N \rangle_{Q_I} \langle |f_m|^{q-N} \rangle_{Q_I, \gamma} \\
&\lesssim& \sum_{I \in \calD^1, Q_I^{\textrm{up}} \cap K_n \neq \emptyset}  A_\alpha(Q_I) \langle |f_m|^N \rangle_{Q_I} \langle |f_m|^{q-N} \rangle_{Q_I, \gamma} \\
&& \quad \textrm{(Since $\mu_{u, \varphi, q, \alpha}$ is a Carleson measure)} \\
&\lesssim& \sum_{I \in \calD^1, Q_I^{\textrm{up}} \cap K_n \neq \emptyset}  A_\alpha\left(Q^{\textrm{up}}_I\right) \langle |f_m|^N \rangle_{Q_I} \langle |f_m|^{q-N} \rangle_{Q_I, \gamma} \\
&\le& \int_{K'_n} M\left( |f_m|^N \right) \left( M \left( \left|f_m \right|^{(q-N) \gamma} \right) \right)^{\frac{1}{\gamma}} dA_\alpha(z) \\
&\le& \left( \int_{K'_n} M\left( |f_m|^N \right)^{\frac{q}{N}} dA_\alpha(z) \right)^{\frac{N}{q}} \left( \int_{K'_n} \left( M \left( \left|f_m \right|^{(q-N) \gamma} \right) \right)^{\frac{q}{\gamma(q-N)}}dA_\alpha(z) \right)^{\frac{q-N}{q}}.
\end{eqnarray*}
By H\"older's inequality, we get
\begin{eqnarray*}
A_{2, m} &\lesssim& \left( \int_{K'_n} |f_m|^{N  \cdot \frac{q}{N}} dA_\alpha(z) \right)^{\frac{N}{q}} \left( \int_{K'_n}  \left|f_m \right|^{(q-N) \gamma \cdot \frac{q}{\gamma(q-N)}}dA_\alpha(z) \right)^{\frac{q-N}{q}}\\
&=& \int_{K'_n} |f_m|^qdA_\alpha,
\end{eqnarray*}
where in the last estimate, we used the fact that $1<\gamma<\frac{q}{q-N}$ and the measure $\one_{K_n'}dA_\alpha$ is doubling. This implies that for any $n \ge 0$, we can take $m$ large enough, such that
\begin{equation} \label{estimateA2}
A_{2, m}< \frac{\varepsilon}{2}.
\end{equation}
Fix such a $m$. Then by the assumption, there exists an $N_0 \in \N$, such that for any $n>N_0$,
$$
\sum_{I \in \calD^1, Q_I^{\textrm{up}} \cap K_n=\emptyset} \mu_{u, \varphi, q, \alpha} (Q_I)^{\frac{1}{\gamma'}}  A^{\frac{1}{\gamma}}_\alpha(Q_I) \langle |f_m|^N \rangle_{Q_I} \langle |f_m|^{q-N} \rangle_{Q_I, \gamma}<\frac{\varepsilon}{2},
$$
which implies
$$
A_{1, m}<\frac{\varepsilon}{2}.
$$
The desired result then follows from the above estimate and \eqref{estimateA2}.
\end{proof}

\subsection{New weighted estimates} In the third part of this section, we apply the idea of sparse domination to obtain some new weighted estimates.

To start with, we recall that  by a \emph{weight} we will mean a  function  $\omega$ that is non-negative on a set of positive measure.

Let us introduce a new class of weights, which we denote as $\textbf{B}_{u, \varphi}^{\alpha, q}$ (Here, $\textbf{B}$ refers to a \emph{``Bergman projection''--like transformation} and this would be clear from the Definition \ref{newweight} below).

\begin{defn} \label{newweight}
Given $\alpha>-1$, $q>1$, $u \in H(\R^2_+)$ and $\varphi: \R_+^2 \to \R_+^2$, the weight class $\textbf{B}_{u, \varphi}^{\alpha, q}$ is defined to be the collection of all weights $\omega$ on $\R_+^2$ satisfying
\begin{equation} \label{Bpcondition}
\left[\omega \right]_{\textbf{B}_{u, \varphi}^{\alpha, q}}:=\sup_{\zeta \in \R_+^2} \int_{\R^2_+} \frac{|u(z)|^q \omega(z)}{\left| \bar{\zeta}-\varphi(z) \right|^{\alpha+2}} dA_\alpha(z)<\infty. \nonumber
\end{equation}
\end{defn}

\begin{rem} \label{newweightrem}
Note that the measure $wdA_\alpha$ itself may not be a Carleson measure. For example, let $u(z)=1, \varphi(z)=z+i$, $\alpha=0$, $q>1$ be any real number and
$$
w(z)=
\begin{cases}
0, \hfill \quad  |z| \ge 1, \ \im z>0; \\
1/|z|, \hfill \quad  |z|<1, \ \im z>0.
\end{cases}
$$

\medskip

\textbf{Claim 1: $C_\varphi$ is a bounded operator on $A^q$ (that is, $A^q_0$).}

\medskip

By Theorem \ref{Carleson}, it suffices to show
$$
\sup_{a \in \R^2_+} \int_{\R^2_+} \frac{|y_a|^2}{|z+i-\bar{a}|^4}dA(z)<\infty.
$$
This is clear as for each $a \in \R^2_+$, we have
\begin{eqnarray*}
\int_{\R^2_+} \frac{|y_a|^2}{|z+i-\bar{a}|^4}dA(z)%
&=&\int_\R \int_0^\infty \frac{|y_a|^2}{ \left(|x-x_a|^2+\left(y+1+y_a\right)^2 \right)^2} dydx \\
&=&  \int_\R \int_0^\infty  \frac{|y_a|^2}{ \left(x^2+\left(y+1+y_a\right)^2 \right)^2}dxdy \\
&\lesssim& 1.
\end{eqnarray*}
In particular, this implies the Carleson measure induced by the operator $C_\varphi$ is a $1$-Carleson measure.

\medskip

\textbf{Claim 2: $w \in \textbf{B}_{1, z+i}^{0, q}$.}

\medskip

 Indeed, for any $\zeta \in \R^2_+$, we have
$$
\int_{\R^2_+} \frac{|u(z)|^q \omega(z)}{\left| \bar{\zeta}-\varphi(z) \right|^{\alpha+2}} dA_\alpha(z)= \int_{|z|<1, \im z>0} \frac{1}{|z||\bar{\zeta}-z-i|^2}dA(z)  \lesssim 1,
$$
where in the last inequality, we first use the fact that $\frac{1}{|\bar{\zeta}-z-i|^2} \le 1$ for $z, \xi \in \R^2_+$ and then integrate in polar coordinate.

\medskip

\textbf{Claim 3: the measure $wdA$ is not $1$-Carleson.}

\medskip

Indeed
$$
w(T_a)=\int_{T_a} w(z)dA(z) \gtrsim \frac{1}{|y_a|} A(T_a),
$$
where $a=y_a i$ with $y_a>0$ sufficiently small. This implies that $\frac{w(T_a)}{A(T_a)} \to \infty \quad \textrm{as} \quad y_a \to 0$, which implies the desired claim.

 Moreover, we would also like to make a comment that this new class of weights is indeed ``natural" , in the sense that it can be interpreted as a version of Sawyer--testing conditions (one may compare it with \eqref{testcond}). This type of condition was first introduced by Sawyer \cite{ES} in 1982 in studying the behavior of Hardy-Littlewood maximal operators acting on weighted $L^p$ spaces, and later, the same idea has been applied by many authors to study other function spaces and operators, such as \cite{APR, PRB}.

\end{rem}

The following  is our main result in this subsection.

\begin{thm} Let $q>1$, $1<s<q'$, $\alpha>-1$, $u \in H(\R_+^2)$, $\varphi: \R_+^2  \mapsto \R_+^2$ be a holomorphic self-mapping and $\omega^{s'} \in \textbf{B}_{u, \varphi}^{\alpha, q}$, where $q'$ (respectively, $s'$) is the conjugate of $q$ (respectively, $s$). Let further, $W_{u, \varphi}: A_\alpha^q \mapsto A_\alpha^q$ be bounded. Then the following weighted estimate holds
\begin{equation} \label{20190930eq01}
\int_{\R_+^2} |u(z)|^q |f(\varphi(z))|^q \omega(z) dA_\alpha(z) \lesssim \left[\omega^{s'} \right]^{\frac{1}{s'}}_{\textbf{B}_{u, \varphi}^{\alpha, q}}\|f\|_{q, \alpha}^q,
\end{equation}
where the implicit constant in the above estimate is independent of the choice of $f$ and the weight $w$. In particular,  the measure  $\mu_{u, \varphi, p, w, \alpha}$ is a $1$-Carleson measure. Here
  $\mu_{u, \varphi, p, w, \alpha}$ is defined by
$$
\int_{\R^2_+} f d\mu_{u, \varphi, p, w, \alpha}=\int_{\R_+^2} |u(z)|^q |f(\varphi(z))|^q \omega(z) dA_\alpha(z), \quad f \ \textrm{is measurable}.
$$

\end{thm}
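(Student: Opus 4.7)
The plan is to combine the Bergman integral representation of Lemma~\ref{integralreps} with a carefully chosen H\"older split that isolates the testing quantity $[\omega^{s'}]_{\textbf{B}_{u,\varphi}^{\alpha,q}}$, and to reduce the remaining piece to an unweighted sparse/maximal estimate. First, I would factor $|u|^q|f\circ\varphi|^q\omega=\bigl(|u|^q|f\circ\varphi|^{q-1}\omega\bigr)\cdot|f\circ\varphi|$, apply Lemma~\ref{integralreps} to the last factor, and invoke Fubini to rewrite the left-hand side of \eqref{20190930eq01} as
$$
C_\alpha\int_{\R^2_+}|f(\zeta)|\int_{\R^2_+}\frac{|u(z)|^q|f(\varphi(z))|^{q-1}\omega(z)}{|\bar\zeta-\varphi(z)|^{\alpha+2}}\,dA_\alpha(z)\,dA_\alpha(\zeta).
$$

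On the inner $z$-integral, I would apply H\"older with exponents $s',s$ after splitting $|u|^q\omega=|u|^{q/s'}\omega\cdot|u|^{q/s}$ and $|\bar\zeta-\varphi(z)|^{-(\alpha+2)}=|\bar\zeta-\varphi(z)|^{-(\alpha+2)/s'}\cdot|\bar\zeta-\varphi(z)|^{-(\alpha+2)/s}$. This yields the product
$$
\left(\int_{\R^2_+}\frac{|u|^q\omega^{s'}}{|\bar\zeta-\varphi(z)|^{\alpha+2}}\,dA_\alpha\right)^{1/s'}\left(\int_{\R^2_+}\frac{|u|^q|f\circ\varphi|^{(q-1)s}}{|\bar\zeta-\varphi(z)|^{\alpha+2}}\,dA_\alpha\right)^{1/s}.
$$
The first factor is bounded uniformly in $\zeta$ by $[\omega^{s'}]^{1/s'}_{\textbf{B}_{u,\varphi}^{\alpha,q}}$, directly from Definition~\ref{newweight}. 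The second factor, via the identity $\int F(\varphi(z))|u|^q\,dA_\alpha=\int F\,d\mu_{u,\varphi,q,\alpha}$, equals $\bigl(\int|f(w)|^{(q-1)s}|\bar\zeta-w|^{-(\alpha+2)}\,d\mu_{u,\varphi,q,\alpha}(w)\bigr)^{1/s}$.

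Since $W_{u,\varphi}$ is bounded on $A^q_\alpha$ by hypothesis, Theorem~\ref{Carleson} ensures that $\mu_{u,\varphi,q,\alpha}$ is $1$-Carleson. Running the cube-localization argument from the proof of Lemma~\ref{sparse-1} (with $p=q$, so the power $A_\alpha(Q_I)^{q/p-1}$ is trivial), combined with subharmonicity of $|f|^{(q-1)s}$ on each Whitney rectangle as in Lemma~\ref{subharmonic}, delivers the pointwise bound
$$
\int_{\R^2_+}\frac{|f(w)|^{(q-1)s}}{|\bar\zeta-w|^{\alpha+2}}\,d\mu_{u,\varphi,q,\alpha}(w)\lesssim\calM\bigl(|f|^{(q-1)s}\bigr)(\zeta),
$$
where $\calM$ is the uncentered Hardy--Littlewood maximal operator on $(\R^2_+,dA_\alpha)$. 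Substituting back into the outer $\zeta$-integral and applying H\"older with exponents $q,q'$ reduces matters to bounding $\|f\|_{q,\alpha}\cdot\|\calM(|f|^{(q-1)s})^{1/s}\|_{q',\alpha}$. The assumption $s<q'$ forces $q'/s>1$, so $\calM$ is bounded on $L^{q'/s}_\alpha$; together with the arithmetic identities $(q-1)q'=q$ and $1+q/q'=q$, this closes the estimate at exactly $[\omega^{s'}]^{1/s'}_{\textbf{B}_{u,\varphi}^{\alpha,q}}\|f\|_{q,\alpha}^q$, which is \eqref{20190930eq01}.

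The Carleson consequence is then a short corollary: testing \eqref{20190930eq01} against the normalized family $f_{a,q}$ from Lemma~\ref{testfunction} and observing that $|f_{a,q}(\varphi(z))|^q\gtrsim y_a^{-(\alpha+2)}$ whenever $\varphi(z)\in T_a$ immediately yields $\mu_{u,\varphi,q,\omega,\alpha}(T_a)\lesssim A_\alpha(T_a)$, uniformly in $a$. I expect the main obstacle to be the H\"older bookkeeping in the second paragraph: one must split the three factors $|u|^q$, $\omega$, and the kernel $|\bar\zeta-\varphi(z)|^{-(\alpha+2)}$ in \emph{exactly} the proportions that reconstruct the integrand defining $[\omega^{s'}]_{\textbf{B}_{u,\varphi}^{\alpha,q}}$ on one side while leaving, on the other side, an expression whose processing through the maximal function produces the right power of $\|f\|_{q,\alpha}$; the identity $(q-1)q'=q$ is the single arithmetic coincidence that makes the whole chain of exponents close.
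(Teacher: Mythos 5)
Your first half matches the paper's proof exactly: factor out one power of $f\circ\varphi$, apply Lemma \ref{integralreps} and Fubini, then split the inner integral by H\"older with exponents $s,s'$ so that one factor is precisely the testing integral defining $[\omega^{s'}]_{\textbf{B}_{u,\varphi}^{\alpha,q}}$ and the other becomes $\bigl(\int |f(w)|^{s(q-1)}|\bar\zeta-w|^{-(\alpha+2)}\,d\mu_{u,\varphi,q,\alpha}(w)\bigr)^{1/s}$. The final H\"older step with exponents $q,q'$ and the arithmetic $(q-1)q'=q$, $q'/s>1$ are also as in the paper.

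The gap is the claimed pointwise bound
$$
\int_{\R^2_+}\frac{|f(w)|^{(q-1)s}}{|\bar\zeta-w|^{\alpha+2}}\,d\mu_{u,\varphi,q,\alpha}(w)\lesssim \calM\bigl(|f|^{(q-1)s}\bigr)(\zeta).
$$
This is false with a constant independent of $f$. What the localization argument of Lemma \ref{sparse-1} actually gives (with $p=q$, $\gamma=1$, and the $1$-Carleson property of $\mu_{u,\varphi,q,\alpha}$) is
$$
\int_{\R^2_+}\frac{|f(w)|^{(q-1)s}}{|\bar\zeta-w|^{\alpha+2}}\,d\mu_{u,\varphi,q,\alpha}(w)\lesssim\sum_{i=1}^3\sum_{I\in\calD^i}\one_{Q_I}(\zeta)\,\bigl\langle |f|^{(q-1)s}\bigr\rangle_{Q_I},
$$
a sum over \emph{all} dyadic scales $I$ with $\zeta\in Q_I$, of which there are infinitely many (one for each $\ell(I)\gtrsim y_\zeta$). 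Each summand is dominated by the maximal function, but the sum is not: already for $\mu=A_\alpha$ one has $\int_{\R^2_+}|\bar\zeta-w|^{-(\alpha+2)}\,dA_\alpha(w)=\infty$ (each dyadic annulus contributes a fixed amount), so no pointwise domination by $\calM$ of the input can hold. The paper avoids this by never collapsing the sparse sum at a fixed $\zeta$: it first integrates the whole sparse expression against $|f(\zeta)|\,dA_\alpha(\zeta)$, obtaining $\sum_{i,I}A_\alpha(Q_I)\langle|f|\rangle_{Q_I}\langle|f|^{q-1}\rangle_{Q_I,s}$, then uses sparseness (Lemma \ref{sparse}: the upper Whitney boxes $Q_I^{\textrm{up}}$ are pairwise disjoint with $A_\alpha(Q_I^{\textrm{up}})\simeq A_\alpha(Q_I)$) to dominate this sum by the \emph{bilinear} quantity $\int_{\R^2_+}\calM(|f|)\,\bigl(\calM(|f|^{(q-1)s})\bigr)^{1/s}dA_\alpha$; only then does the $q,q'$ H\"older step apply. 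Note that in the corrected version the first factor is $\calM(|f|)$, not $|f|$ itself, because $|f(\zeta)|$ gets replaced by the average $\langle|f|\rangle_{Q_I}$ on each $Q_I^{\textrm{up}}$. Your argument is repairable, but the repair is exactly the sparse/disjointness step you skipped; as written the proof does not close.
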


\begin{proof}
It suffices for us to prove the estimate \eqref{20190930eq01}, while the proof that the measure  $\mu_{u, \varphi, p, w, \alpha}$ is $1$-Carleson measure is standard and follows from a simply modification of its unit ball analog (see, e.g., \cite[Theorem 2.25]{Zhu}). Therefore, we omit the proof here.

The proof of the estimate \eqref{20190930eq01} follows from the spirit of Proposition \ref{20190914prop01} and Theorem \ref{190313cor01}. First, following the argument in the estimate \eqref{190212eq01}, we have
\begin{eqnarray*}
&&\int_{\R_+^2} |u(z)|^q |C_\varphi f(z)|^q \omega(z) dA_\alpha(z) \\
&\lesssim&  \int_{\R_+^2} |f(\zeta)| \left( \int_{\R_+^2} \frac{|f \circ \varphi(z)|^{q-1} |u(z)|^q w(z) dA_\alpha(z)}{|\bar{\zeta}-\varphi(z) |^{\alpha+2}} \right) A_\alpha(\zeta) \\
& = &\int_{\R_+^2} |f(\zeta)|  \left( \int_{\R^2_+} \frac{|f \circ \varphi(z)|^{q-1} |u(z)|^{\frac{q}{s}}}{|\bar{\zeta}-\varphi(z)|^{\frac{\alpha+2}{s}}} \cdot \frac{|u(z)|^{\frac{q}{s'}}\omega(z)}{|\bar{\zeta}-\varphi(z)|^{\frac{\alpha+2}{s'}}} dA_\alpha(z) \right) dA_\alpha(\zeta)\\
&\le& \int_{\R_+^2} |f(\zeta)|  \left( \int_{\R_+^2} \frac{|f \circ \varphi(z)|^{s(q-1)} |u(z)|^q}{|\bar{\zeta}-\varphi(z)|^{\alpha+2}} dA_\alpha(z) \right)^{\frac{1}{s}} \\
&& \quad \quad \quad \quad\quad \quad \quad \quad \cdot \left( \int_{\R_+^2} \frac{|u(z)|^q \omega^{s'}(z)}{|\bar{\zeta}-\varphi(z)|^{\alpha+2}} dA_\alpha(z) \right)^{\frac{1}{s'}} dA_\alpha(\zeta) \\
&\le& \left[\omega^{s'} \right]^{\frac{1}{s'}}_{\textbf{B}_{u, \varphi}^{\alpha, q}}\int_{\R_+^2} |f(\zeta)| \left( \int_{\R_+^2} \frac{|f(z)|^{s(q-1)}}{|\bar{\zeta}-z|^{\alpha+2}} d\mu_{u, \varphi, p, \alpha}(z) \right)^{\frac{1}{s}} dA_\alpha(\xi).
\end{eqnarray*}
Moreover, using Lemma \ref{sparse-1}, we have
\begin{eqnarray*}  \label{020202}
&& \int_{\R_+^2} |f(\zeta)| \left( \int_{\R_+^2} \frac{|f(z)|^{s(q-1)}}{|\bar{\zeta}-z|^{\alpha+2}} d\mu_{u, \varphi, p, \alpha}(z) \right)^{\frac{1}{s}} dA_\alpha(\xi) \\
&\lesssim & \int_{\R_+^2}  |f(\zeta)| \left( \sum_{i=1}^3 \sum_{I \in \calD^i} \frac{\one_{Q_I}(\zeta)}{A_\alpha(Q_I)} \int_{Q_I} |f(z)|^{s(q-1)}dA_\alpha(z) \right)^{\frac{1}{s}} dA_\alpha(\zeta) \\
&\lesssim& \int_{\R_+^2} |f(\zeta)|  \sum_{i=1}^3 \sum_{I \in D^i} \frac{\one_{Q_I}(\zeta)}{A_\alpha(Q_I)^{\frac{1}{s}}} \left( \int_{Q_I} |f(z)|^{s(q-1)} dA_\alpha(z) \right)^{\frac{1}{s}} dA_\alpha(z)  \\
& = &\sum_{i=1}^3 \sum_{I \in \calD^i} \frac{1}{A_\alpha(Q_I)^{\frac{1}{s}}} \left(\int_{Q_I} |f(z)| dA_\alpha(z) \right) \cdot \left( \int_{Q_I} |f(z)|^{s(q-1)} dA_\alpha(z) \right)^{\frac{1}{s}} \\
&= & \sum_{i=1}^3 \sum_{I \in \calD^i} A_\alpha(Q_I) \langle |f| \rangle_{Q_I} \langle |f|^{q-1} \rangle_{Q_I, s}  \\
& \lesssim& \sum_{i=1}^3 \sum_{I \in \calD^i} A_\alpha(Q^{\textrm{up}}_I) \langle |f| \rangle_{Q_I} \langle |f|^{q-1} \rangle_{Q_I, s} \\
&\lesssim& \int_{\R_+^2} \calM(|f|) \left(\calM( |f|^{(q-1)s}) \right)^{\frac{1}{s}} dA_\alpha(z) \\
&\le& \left( \int_{\R_+^2} \left| \calM(|f|) \right|^q dA_\alpha(z) \right)^{\frac{1}{q}} \cdot  \left(  \int_{\R_+^2} \left(\calM( |f|^{(q-1)s}) \right)^{\frac{q'}{s}} dA_\alpha(z) \right)^{\frac{1}{q'}} \\
&\lesssim& \|f\|_q \cdot \|f\|_q^{q-1}=\|f\|_q^q,
\end{eqnarray*}
where in the last inequality, we use the fact that $s<q'$ and the Hardy-Littlewood maximal operator is bounded on $L_\alpha^{\frac{q'}{s}}$. The desired result follows from combining these estimates.
\end{proof}

%--------------------------------------------------------------------------------
\section{Unit ball analog}

In this section, we extend our main results to the unit ball case, whose proof follows closely from the upper half plane case, and therefore, we would like to leave the details to the interested reader. Let us first recall some basic definitions.

Let $\B$ be the unit ball in $\C^n$ and $\SSS$ be its boundary. For $\alpha>-1$ and $p \ge 1$, the weighted Bergman space $A_\alpha^p(\B)$ is defined to be the space of holomorphic functions on $\B$ satisfying
$$
\|f\|_{A^p_\alpha(\B)}^p:=c_\alpha \int_{\B} |f(z)|^p (1-|z|^2)^\alpha dV(z)<\infty,
$$
where $dV(z)$ is the standard Lebesgue measure on $\B$ and $c_\alpha$ is chosen so that the measure $c_\alpha(1-|z|^2)^{\alpha}dV(z)$ is a probability measure on $\B$.

Next we recall some geometric facts on the unit ball $\B$. For $a \in \B$, let $\Phi_a$ be the involutive automorphism of $\B$ that interchanges $a$ and $0$, that is,
$$
\Phi_a \circ \Phi_a=\textrm{id}, \quad  \Phi_a(0)=a \quad \textrm{and} \quad \Phi_a(a)=0.
$$
This allows us to define the \emph{Bergman metric}  $\beta$ on $\B$, by
$$
\beta(z, w)=\frac{1}{2} \log \frac{1+|\Phi_z(w)|}{1-|\Phi_z(w)|}, \quad z, w \in \B.
$$
We are ready to construct the sparse collections on $\B$. To start with, we recall that, as in Definition \ref{dyadicgrid}, there exists a dyadic grid on the unit sphere $\SSS$, and we denote it as $\calD:=\{ Q_i^k\}_{i, k \in \Z}$ as usual (see, e.g., \cite{RTW}).

The following result can be understood as a version of the Whitney decomposition of the unit ball (in particular, this decomposition is parallel to the decomposition in Lemma \ref{sparse}).
\begin{prop}[\cite{RTW}] \label{dyadicgridball}
Let $\alpha>-1, \lambda, \theta>0$ and $\calD:=\{ Q_i^k\}_{i, k \in \Z}$ be a dyadic grid on $\SSS$ . Then there exists a collection of points in $\B$, which is denoted as $\calT$, satisfying the following properties:
\begin{enumerate}
\item [(1)] The set $\calT$ has a one-to-one correspondence with $\calD$. Moreover, we can write
$$
\calT=\bigcup_{N=1}^\infty \bigcup_{j=1}^{J_N} \left\{ c_j^N \right\},
$$
where $c_j^N \in \SSS_{\left(N+\frac{1}{2} \right) \theta}$, the sphere of radius $\left(N+\frac{1}{2} \right) \theta$ in the Bergman metric, $\frac{c_j^N}{|c_j^N|} \in Q_j^N$, and $J_N \ge 0$ is the number of cubes in $N$-th generation in $\calD$, which only depends on $N, n, \theta $ and $\lambda$;

\item [(2)]
$\calT$ has a tree structure, that is, we say $c_i^{N+1}$ is a child of $c_j^N$ for some $i, j>0$, if $ \frac{c_i^{N+1}}{|c_i^{N+1}|} \in  Q_j^N$.
\end{enumerate}
Moreover, for each $a \in \calT$, we can find a Borel set $K_a$, such that the following properties hold:
\begin{enumerate}
\item [(a)] $\B=\bigcup\limits_{a \in \calT} K_a$ and the sets $K_a$ are pairwise disjoint. Furthermore, there are constants $C_1$ and $C_2$ depending on $\lambda$ and $\theta$ such that for all $a \in \calT$ there holds:
$$
B_\beta(a, C_1) \subset K_\alpha \subset B_\beta(\alpha, C_2);
$$
\item [(b)] $V_\alpha(\widehat{K_a}) \simeq V_\alpha(K_a) \simeq (1-|a|^2)^{n+1+\alpha} \simeq e^{-2N\theta(n+1+\alpha)}$, for $a \in\B$ with $N\theta \le \beta(0, a)<(N+1)\theta$, where
$$
\widehat{K_\alpha}:=\bigcup_{a' \in \calT: a' \ \textrm{is a dyadic descendant of} \ a} K_{a'};
$$
\item [(c)] Every element of $\calT$ has at most $e^{2n\theta}$ children.
\end{enumerate}
\end{prop}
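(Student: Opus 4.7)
The plan is to construct $\calT$ and $\{K_a\}_{a\in\calT}$ explicitly by combining the dyadic structure on $\SSS$ with the radial Bergman geometry of $\B$. First I would set up the points of $\calT$ generation by generation. The Bergman sphere $\SSS_{(N+1/2)\theta}$ coincides with the Euclidean sphere of radius $r_N := \tanh\bigl((N+1/2)\theta\bigr)$. For each dyadic cube $Q_j^N \in \calD$ at generation $N$, I would pick one representative $c_j^N$ on $\SSS_{(N+1/2)\theta}$ whose radial projection $c_j^N/|c_j^N|$ lies in $Q_j^N$ — say, the radial lift of the distinguished center point of $Q_j^N$ coming from the construction of a dyadic grid on $\SSS$. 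This immediately gives the bijection between $\calT$ and $\calD$ in (1) and the parent/child relation in (2).

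Next, for each $a = c_j^N \in \calT$, I would define the partition cell
$$
K_a := \bigl\{ z \in \B : z/|z| \in Q_j^N,\ N\theta \le \beta(0,z) < (N+1)\theta \bigr\}.
$$
Because $\{Q_j^N\}_j$ partitions $\SSS$ for each fixed $N$, and the radial shells $\{N\theta \le \beta(0,z) < (N+1)\theta\}_N$ partition $\B$, the family $\{K_a\}_{a \in \calT}$ is automatically a disjoint partition of $\B$, giving the first half of (a). The two-sided Bergman-ball inclusion $B_\beta(a, C_1) \subset K_a \subset B_\beta(a, C_2)$ is where most of the work lies. The outer inclusion comes from the fact that, within a fixed Bergman shell, the Bergman metric is comparable to the nonisotropic Koranyi-type metric on $\SSS$ rescaled by $(1-r_N^2)^{1/2}$; the nonisotropic diameter of $Q_j^N$ is bounded by a $\lambda$-dependent multiple of $(1-r_N^2)^{1/2}$, so the Bergman diameter of $K_a$ is uniformly bounded in $N$. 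The inner inclusion uses that $a$ sits on the middle sphere of its shell and is radially above the interior of $Q_j^N$, so a small Bergman ball around $a$ stays inside both the shell and the radial cylinder over $Q_j^N$.

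The step I expect to be the main obstacle is making these comparability estimates precise with constants depending only on $\theta$ and $\lambda$. It requires careful use of the identity
$$
\beta(z,w) = \tfrac{1}{2}\log \frac{1+|\Phi_z(w)|}{1-|\Phi_z(w)|}
$$
together with the standard asymptotic $|1-\langle z,w\rangle| \simeq (1-|z|^2) + (1-|w|^2) + |1-\langle z/|z|, w/|w|\rangle|$ for $z,w$ in a thin Bergman shell, so as to separate the radial and tangential contributions.

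Once this comparability is in hand, the volume estimate (b) follows from the well-known asymptotic $V_\alpha\bigl(B_\beta(a,r)\bigr) \simeq (1-|a|^2)^{n+1+\alpha}$ for fixed $r$, combined with $1-r_N^2 \simeq e^{-2N\theta}$. The children bound (c) is immediate from the corresponding bound for the dyadic grid $\calD$ on $\SSS$, with the explicit constant $e^{2n\theta}$ produced by the standard doubling of Bergman volume between two consecutive radial shells of width $\theta$. Altogether this recovers the statement in \cite{RTW}, whose full details we refer to.
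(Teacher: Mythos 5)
The paper gives no proof of this proposition: it is quoted from \cite{RTW}, where it in turn rests on the Bergman--tree construction of Arcozzi--Rochberg--Sawyer. So the only question is whether your construction is sound on its own terms. It is the standard one --- lift the center of each generation-$N$ cube $Q_j^N$ radially to the Bergman sphere $\SSS_{(N+1/2)\theta}$, and let $K_a$ be the intersection of the radial cylinder over $Q_j^N$ with the shell $N\theta\le\beta(0,z)<(N+1)\theta$ --- and your treatment of (1), (2), the partition half of (a), and (c) is correct in outline. Two hypotheses you use silently should be stated: for the two-sided inclusion $B_\beta(a,C_1)\subset K_a\subset B_\beta(a,C_2)$ to hold with constants uniform in $N$, the nonisotropic diameter of the generation-$N$ cubes must be comparable to $(1-r_N^2)^{1/2}\simeq e^{-N\theta}$ for \emph{every} $N$, i.e.\ the grid must refine by the fixed ratio $e^{-\theta}$ per generation (this is the compatibility between $\lambda$ and $\theta$ built into the construction in \cite{RTW}); and the inner inclusion needs the inner-ball property of the dyadic cubes, namely that $Q_j^N$ contains a surface ball about the chosen center of radius comparable to its diameter.

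The genuine gap is in (b). Your argument yields only $V_\alpha(K_a)\simeq(1-|a|^2)^{n+1+\alpha}\simeq e^{-2N\theta(n+1+\alpha)}$; it says nothing about the dyadic tent $\widehat{K_a}$, which is a union over \emph{all} descendants of $a$ and is not contained in any fixed Bergman ball. One must sum the cell volumes down the tree: by (c) there are at most $e^{2n\theta k}$ descendants $k$ generations below $a$, each with $V_\alpha(K_{a'})\simeq e^{-2(N+k)\theta(n+1+\alpha)}$, so $V_\alpha(\widehat{K_a})\lesssim e^{-2N\theta(n+1+\alpha)}\sum_{k\ge0}e^{-2k\theta(1+\alpha)}$, and the geometric series converges precisely because $\alpha>-1$. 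This is the only place the hypothesis $\alpha>-1$ enters, and it is exactly the estimate that makes $Q_\calD$ a sparse collection (the property the paper actually needs downstream); without it the proof of (b) is incomplete.
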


\begin{defn}
\begin{enumerate}
\item [(1)] We refer to $\calT$ as a \emph{Bergman tree} associated to the dyadic grid $\calD$, $a \in \calT$ the \emph{center} of $K_a$, $\B=\bigcup\limits_{a \in \calT} K_a$ the \emph{Whitney decomposition of $\B$} with respect to $\calD$ and
$\widehat{K_\alpha}$ the \emph{dyadic tent} under $K_\alpha$;
\item [(2)] For any cap $Q_j^N \in \calD$, we define its \emph{upper Whitney box} to be $K_{c_j^N}$, and its \emph{dyadic tent} to be $\widehat{K_{c_j^N}}$;
\item [(3)] For any $\calD$, a dyadic grid on $\SSS$, we define $Q_\calD:=\bigcup\limits_{Q_j^N \in \calD} \left\{ \widehat{K_{c_j^N}}\right\}=\bigcup\limits_{Q \in \calD} \left\{ \widehat{K_{c(Q)}}\right\}$ to be the collection of all its dyadic tents, where we use $c$ to denote the one-to-one correspondence between $\calD$ and $\calT$.
\end{enumerate}
\end{defn}

Note that an easy consequence of Proposition  \ref{dyadicgridball}, (b), is that for each dyadic grid $\calD$ on $\SSS$, the collection $Q_\calD$ is a sparse collection.

The following proposition can be understood as the replacement for a collection of dyadic grids that can approximate any cube appropriately in the setting of $\B$.

\begin{prop} [{\cite[Lemma 3]{RTW}}] \label{Meiball}
There is a finite collection of Bergman trees $\{\calT^\ell\}_{\ell=1}^M$ such that for all $z \in \B$, there is a tree $\calT$ from the finite collection and an $a \in \calT$ such that the dyadic tent $\widehat{K_\alpha}$ contains the tent $T_z$ and $V_\alpha(\widehat{K_a}) \simeq V_\alpha(T_z)$, where $T_z:=\left\{w \in \B: \left|1-\bar{w} \frac{z}{|z|} \right|<1-|z| \right\}$ is the Carleson tent over $z$.
\end{prop}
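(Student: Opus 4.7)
My strategy is to transfer the three-grids construction of Lemma \ref{Meilemma} from $\R$ to the sphere $\SSS$ (equipped with a non-isotropic pseudo-metric), and then lift the resulting dyadic grids on $\SSS$ to Bergman trees on $\B$ via Proposition \ref{dyadicgridball}. The key geometric observation is that, up to bounded multiplicative constants, the Carleson tent $T_z$ is the ``tent'' over a pseudo-metric ball on $\SSS$ of radius comparable to $1-|z|$; so once the adjacent-grids machinery is available on the base $\SSS$, the statement on $\B$ essentially follows from the dyadic-tent construction already given.

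More precisely, I would proceed in three steps. First, equip $\SSS$ with the non-isotropic pseudo-metric $d(\xi,\eta) := |1-\bar\xi\eta|$, which makes $(\SSS, d)$ a space of homogeneous type. The Hyt\"onen--Kairema construction of adjacent dyadic systems then yields a finite collection $\{\calD^\ell\}_{\ell=1}^M$ of dyadic grids on $\SSS$ such that for every pseudo-metric ball $B_d(\xi, r)$ there are $\ell \in \{1,\dots,M\}$ and $Q \in \calD^\ell$ with $B_d(\xi, r) \subset Q$ and $\mathrm{diam}_d(Q) \lesssim r$; this is the sphere analog of Lemma \ref{Meilemma}. Second, for each $\ell$ I apply Proposition \ref{dyadicgridball} to $\calD^\ell$ to obtain a Bergman tree $\calT^\ell$, together with its Whitney decomposition, the bijection $Q \mapsto c(Q)$ onto $\calT^\ell$, and the associated dyadic tents $\widehat{K_{c(Q)}}$; this gives the desired finite family $\{\calT^\ell\}_{\ell=1}^M$. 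Third, given $z \in \B$, I set $\xi_z := z/|z|$ and $r_z := 1-|z|$, and note that the ``base cap'' of $T_z$ on $\SSS$ is exactly the ball $B_d(\xi_z, r_z)$. Choosing $\ell$ and $Q \in \calD^\ell$ so that $B_d(\xi_z, r_z) \subset Q$ with $\mathrm{diam}_d(Q) \simeq r_z$ and setting $a := c(Q)$, the radial depth of $\widehat{K_a}$ described in Proposition \ref{dyadicgridball}(b) together with $1-|a|^2 \simeq \mathrm{diam}_d(Q) \simeq r_z$ guarantees $\widehat{K_a} \supset T_z$. Both volumes are then comparable to $r_z^{n+1+\alpha}$---$V_\alpha(\widehat{K_a})$ by Proposition \ref{dyadicgridball}(b), and $V_\alpha(T_z)$ by the standard Forelli--Rudin estimate---so $V_\alpha(\widehat{K_a}) \simeq V_\alpha(T_z)$.

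The main obstacle is the first step: one must check that the Hyt\"onen--Kairema adjacent systems on $(\SSS, d)$ can be chosen compatibly with the Bergman tree construction of Proposition \ref{dyadicgridball}, so that for every $Q \in \calD^\ell$ the assigned center $c(Q) \in \calT^\ell$ produces a dyadic tent $\widehat{K_{c(Q)}}$ whose base cap really is $Q$ up to bounded dilation. A secondary source of care is the quantitative transition between $d$ on $\SSS$ and the Bergman metric $\beta$ on $\B$, which is needed to justify $1-|a|^2 \simeq \mathrm{diam}_d(Q)$ and the radial comparability; this is essentially encoded in parts (1) and (b) of Proposition \ref{dyadicgridball}. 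Once these compatibility issues are settled, the containment $T_z \subset \widehat{K_a}$ and the volume comparison reduce to routine size estimates in the space of homogeneous type $(\B, \beta, V_\alpha)$.
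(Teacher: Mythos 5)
The paper does not actually prove this proposition---it is imported verbatim from \cite[Lemma 3]{RTW}---and your outline is essentially the argument of that source: adjacent (Hyt\"onen--Kairema/Mei-type) dyadic systems on $\SSS$ viewed as a space of homogeneous type under the non-isotropic quasi-metric, lifted to Bergman trees and dyadic tents via the Whitney construction of Proposition \ref{dyadicgridball}, followed by the Forelli--Rudin volume count. Your sketch is correct in outline and correctly isolates the one delicate compatibility point (that every Whitney box meeting the Carleson box over $Q$ has its center a tree-descendant of $c(Q)$, which is what yields $T_z\subset\widehat{K_{c(Q)}}$); the only minor adjustment is that the angular projection of $T_z$ onto $\SSS$ lies in $B_d(\xi_z,2r_z)$ rather than $B_d(\xi_z,r_z)$, a harmless dilation of constants.
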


\begin{rem}
The above proposition allows us to extend the definition of upper Whitney box in the following way: let $E \subset \SSS$ (here, $E$ is not necessarily a cap in any dyadic grid $\calD$ on $\SSS$) with satisfying $E=\SSS \cap \partial T_z$ for some $z \in \B$. Then by Proposition \ref{Meiball}, we can find some $a=a_z \in \B$, such that $T_z \subseteq \widehat{K_\alpha}$ and $V_\alpha(\widehat{K_a}) \simeq V_\alpha(T_z)$. In this case, we define the \emph{upper Whitney box} of $E$ as $K_a$.
\end{rem}

Therefore, combing Propositions \ref{dyadicgridball} and  \ref{Meiball}, we see that there exists dyadic grids $\calD^1, \dots, \calD^M$ on $\SSS$, which corresponds to $\calT^1, \dots, \calT^M$, respectively, playing the same roles as $\calD^1, \dots, \calD^3$ defined in \eqref{explicitconst} for the upper half plane case.

\medskip

We are ready to state our main results for the boundedness and compactness of the weighted composition operators acting on the weighted Bergman spaces on $\B$.

\begin{thm}
 Let $q \ge 1, \alpha>-1$, $u\in H(\B)$ and $\varphi: \B \rightarrow \B$ be a holomorphic mapping. Then the following statements are equivalent.
\begin{enumerate}
\item [(i)] $W_{u, \varphi}: A^q_\alpha(\B)  \mapsto A^q_\alpha(\B)$ is bounded;
\item [(ii)] For any $f \in A^q_\alpha(\B)$,
$$
\|W_{u, \varphi} f\|_{A^q_\alpha(\B)}^q \lesssim \inf\limits_{N \in \N,  1 \le N \le q} \left( \sum_{i=1}^M \sum_{Q \in \calD^i} V_\alpha\left(\widehat{K_{c(Q)}}\right) \left\langle |f|^N \right\rangle_{\widehat{K_{c(Q)}}} \cdot  \left\langle |f|^{q-N} \right\rangle_{\widehat{K_{c(Q)}}} \right).
$$
\end{enumerate}
\end{thm}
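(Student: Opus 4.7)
The plan is to mirror the two-direction proof of Theorem \ref{190313cor01}, with the three dyadic grids $\calD^1,\calD^2,\calD^3$ on $\R$ replaced by the finite collection of dyadic grids $\calD^1,\dots,\calD^M$ on $\SSS$ and the associated Bergman trees $\calT^1,\dots,\calT^M$ provided by Propositions \ref{dyadicgridball} and \ref{Meiball}. Throughout the transfer, the Carleson boxes $Q_I$ are replaced by the dyadic tents $\widehat{K_{c(Q)}}$, the upper Whitney rectangles $Q_I^{\textrm{up}}$ by the Whitney boxes $K_{c(Q)}$, and the kernel $|z-\bar a|^{\alpha+2}$ by $|1-\langle z, a\rangle|^{n+1+\alpha}$.

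For $(i)\Rightarrow(ii)$, I would first establish the ball analog of Proposition \ref{20190914prop01} specialized to $p=q$ and $\gamma=1$. This rests on three ingredients: the reproducing integral representation
\[
f(z)=c_\alpha\int_\B \frac{f(w)}{(1-\langle z,w\rangle)^{n+1+\alpha}}\,dV_\alpha(w),\qquad f\in A^q_\alpha(\B),
\]
a Carleson-embedding theorem showing that $W_{u,\varphi}$ is bounded on $A^q_\alpha(\B)$ iff the pushforward measure $\mu_{u,\varphi,q,\alpha}$ is a $1$-Carleson measure on $\B$, and the ball version of Lemma \ref{sparse-1}. Starting from
\[
\|W_{u,\varphi}f\|_{A^q_\alpha(\B)}^q=\int_\B |f\circ\varphi|^{q-N}\,|u|^q\,|C_\varphi(f^N)|\,dV_\alpha,
\]
inserting the reproducing formula for $C_\varphi(f^N)$ and applying Fubini yields a double integral that mirrors \eqref{190212eq01}. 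The sparse bound then uses Proposition \ref{Meiball} to locate, for each pair $(\zeta,z)\in\B\times\B$, a dyadic tent $\widehat{K_{c(Q)}}$ containing both points with $V_\alpha(\widehat{K_{c(Q)}})\simeq |1-\langle \zeta,z\rangle|^{n+1+\alpha}$, followed by a Whitney-type averaging based on subharmonicity over Bergman balls together with Proposition \ref{dyadicgridball}(b), in place of Lemma \ref{subharmonic}. Integrating the resulting pointwise sparse estimate against $|f(\zeta)|^N\,dV_\alpha(\zeta)$ produces the sparse form in (ii).

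For $(ii)\Rightarrow(i)$, I would pick an index $i_0$ realizing the minimum on the right of (ii) up to a factor $M$, pass from $V_\alpha(\widehat{K_{c(Q)}})$ to $V_\alpha(K_{c(Q)})$ via Proposition \ref{dyadicgridball}(b), and exploit the fact that the boxes $\{K_{c(Q)}\}$ tile $\B$ to recast the sparse sum as a single integral $\int_\B \calM(|f|^N)\,\calM(|f|^{q-N})\,dV_\alpha$, where $\calM$ is the uncentered Hardy--Littlewood maximal operator on $(\B,dV_\alpha)$ over Carleson tents. H\"older's inequality with exponents $q/N$ and $q/(q-N)$, together with the $L^r_\alpha(\B)$-boundedness of $\calM$ for $r>1$, then yields $\|f\|_{A^q_\alpha(\B)}^q$; the degenerate case $N=q$ is handled exactly as in the upper half plane argument. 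The main obstacle is the ball analog of Lemma \ref{sparse-1}: while the transfer is morally routine, one must replace the transparent one-dimensional Euclidean geometry by the non-isotropic Bergman geometry, carefully match dyadic caps on $\SSS$ to points of the Bergman tree, and verify that the scaling $\alpha+2$ becomes $n+1+\alpha$. Once this geometric lemma is in place, the remaining steps transfer essentially verbatim from the upper half plane.
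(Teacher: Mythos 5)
Your proposal is correct and follows exactly the route the paper intends: the paper gives no proof of this theorem, stating only that it "follows closely from the upper half plane case," and your transfer of Theorem \ref{190313cor01} — dyadic tents $\widehat{K_{c(Q)}}$ for Carleson boxes, Whitney boxes $K_{c(Q)}$ for $Q_I^{\textrm{up}}$, the kernel $|1-\langle z,w\rangle|^{n+1+\alpha}$, and the ball analog of Lemma \ref{sparse-1} from \cite{RTW} — is precisely the argument being left to the reader. You have also correctly identified the only genuinely nontrivial step in the transfer, namely locating for each pair $(\zeta,z)$ a dyadic tent of volume $\simeq|1-\langle\zeta,z\rangle|^{n+1+\alpha}$ containing both points via Proposition \ref{Meiball}.
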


\begin{thm}
 Let $2p>q > p \ge 1, \alpha>-1$, $u\in H(\B)$ and $\varphi: \B \rightarrow \B$ be a holomorphic mapping. Suppose
$$
\Z_{p, q}:=\left\{ N \in \N:  ~~~~ N \ge 1, N<p<q<p+N \right\} \neq \emptyset.
$$
Then the following statements are equivalent:
\begin{enumerate}
\item [(i)] $W_{u, \varphi}: A^p_\alpha(\B) \mapsto A^q_\alpha(\B)$ is bounded;
\item [(ii)] For any $f \in A^p_\alpha(\B)$,
$$
\|W_{u, \varphi} f\|_{A^q_\alpha(\B)}^q \lesssim \inf\limits_{N \in \Z_{p, q}} \left( \sum_{i=1}^M \sum_{Q \in \calD^i} V^{\frac{q}{p}}_\alpha\left(\widehat{K_{c(Q)}}\right) \left\langle |f|^N \right\rangle_{\widehat{K_{c(Q)}}} \cdot  \left\langle |f|^{q-N} \right\rangle_{\widehat{K_{c(Q)}}} \right).
$$
\end{enumerate}
\end{thm}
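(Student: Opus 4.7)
The plan is to follow the scheme used for Theorem \ref{190313cor02} in the upper half-plane, replacing the real-line dyadic tools by the ball geometry encoded in Propositions \ref{dyadicgridball} and \ref{Meiball}. Three ball-side ingredients need to be set up first: (a) the Bergman reproducing formula $f(z)=C_\alpha\int_\B \frac{f(\zeta)}{(1-z\bar\zeta)^{n+1+\alpha}}\,dV_\alpha(\zeta)$ for $f\in A^p_\alpha(\B)$; (b) a Carleson-embedding theorem identifying boundedness of $W_{u,\varphi}\colon A^p_\alpha(\B)\to A^q_\alpha(\B)$ with the $\frac{q}{p}$-Carleson condition on the pushforward $\mu_{u,\varphi,p,\alpha}$; and (c) the ball analog of Lemma \ref{sparse-1}, bounding
\[
\int_{\B} \frac{|f(z)|^{q-N}}{|1-\bar\zeta z|^{n+1+\alpha}}\,d\mu(z)
\]
by sums of averages over dyadic tents $\widehat{K_{c(Q)}}$ taken from the finite family $\calD^1,\dots,\calD^M$, with the scale matching $V_\alpha(\widehat{K_{c(Q)}})\simeq|1-\bar\zeta z|^{n+1+\alpha}$ furnished by Proposition \ref{Meiball}.

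For the implication (i) $\Rightarrow$ (ii), I would imitate the computation in \eqref{190212eq01}: write $|W_{u,\varphi}f|^q=|u|^q|f\circ\varphi|^{q-N}|C_\varphi(f^N)|$, expand $f^N$ via the reproducing formula, apply Fubini to pair the inner integral with $d\mu_{u,\varphi,q,\alpha}$, and then invoke the ball sparse lemma with $\gamma=1$. This is nothing but the ball analog of Proposition \ref{20190914prop01} specialized to $\gamma=1$, and it yields (ii) with the factor $V_\alpha^{q/p}(\widehat{K_{c(Q)}})$.

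For (ii) $\Rightarrow$ (i), fix $N\in\Z_{p,q}$ and set $l=p/(p+N-q)$, $l'=p/(q-N)$. The definition of $\Z_{p,q}$ guarantees $l,l'>1$ and the fractional identity
\[
\frac{N}{p}-\frac{1}{l}=\frac{q}{p}-1,
\]
which is exactly the index relation needed to invoke a fractional Hardy--Littlewood maximal operator $\calM_{q/p-1}$ on $(\B,dV_\alpha)$. Using $V_\alpha(\widehat{K_{c(Q)}})\simeq V_\alpha(K_{c(Q)})$ from Proposition \ref{dyadicgridball}(b) and the fact that $\{K_{c(Q)}\}$ tiles $\B$, each of the $M$ tree sums in (ii) is dominated by
\[
\int_\B \calM_{q/p-1}(|f|^N)(z)\,\calM(|f|^{q-N})(z)\,dV_\alpha(z).
\]
H\"older's inequality with exponents $l,l'$, followed by the boundedness $\calM_{q/p-1}\colon L^{p/N}_\alpha(\B)\to L^l_\alpha(\B)$ and $\calM\colon L^{l'}_\alpha(\B)\to L^{l'}_\alpha(\B)$, produces $\|f\|_{A^p_\alpha(\B)}^q$ and hence the boundedness of $W_{u,\varphi}$.

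The chief obstacle is item (c) above, the ball form of Lemma \ref{sparse-1}. In the upper-half-plane proof one starts from the concrete interval $I_{z,\zeta}$ of length $\simeq|\bar\zeta-z|$, then uses Lemma \ref{Meilemma} to locate a Carleson box from one of three dyadic grids with comparable area. In the ball one must instead begin with the sphere-cap containing $z/|z|$ and $\zeta/|\zeta|$ of radius $\simeq|1-\bar\zeta z|^{1/2}$, apply Proposition \ref{Meiball} to dominate its Carleson tent by a dyadic tent $\widehat{K_{c(Q)}}$ of some $\calD^i$ with $V_\alpha$-comparable size, and simultaneously verify that each such $\widehat{K_{c(Q)}}$ is assigned by bounded-overlap. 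Once this geometric correspondence is in place, the Whitney decomposition is replaced by the tree structure of Proposition \ref{dyadicgridball}, and the subharmonic mean-value estimate over Bergman balls $B_\beta(a,C_2)$ of uniform radius (playing the role of Lemma \ref{subharmonic}) completes the proof exactly as in the upper half-plane argument.
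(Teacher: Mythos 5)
Your proposal is correct and follows essentially the same route as the paper, which for the ball case simply transplants the proof of the half-plane Theorem \ref{190313cor02}: the reproducing-formula expansion and the ball version of Lemma \ref{sparse-1} (already available in \cite{RTW}) give (i) $\Rightarrow$ (ii), while the choice $l=p/(p+N-q)$, $l'=p/(q-N)$, the index identity, and the fractional maximal operator give (ii) $\Rightarrow$ (i). The only cosmetic difference is your normalization of the fractional index ($\calM_{q/p-1}$ versus the paper's $\calM_{2q/p-2}$ in $\R^2_+$), which does not affect the argument.
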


\begin{thm}
 Let $q \ge 1, \alpha>-1$, $u\in H(\B)$ and $\varphi: \B \rightarrow \B$ be a holomorphic mapping.  If $W_{u, \varphi}: A^q_\alpha(\B) \mapsto A^q_\alpha(\B)$ is bounded,  then the following statements are equivalent.
\begin{enumerate}
\item [ (i)] $W_{u, \varphi}: A^q_\alpha(\B) \mapsto A^q_\alpha(\B)$ is compact;
\item [ (ii)]  Let $1 \le N \le q, N \in \N$ and $1<\gamma<\frac{q}{q-N}$, or $q=N \in \N$ and $\gamma>1$. Let further, $\{K_j\}_{j \ge 1}$ be a sequence of exhausting sets of $\B$, that is, $\{K_j\}_{j \ge 0}$ is a collection of compact sets in $\B$, satisfying $K_1 \subsetneq K_2 \subsetneq \dots K_j \subsetneq \dots \subsetneq \B$, and $\bigcup\limits_{j=1}^\infty K_j=\B$. Then for any bounded set $\{f_m\}_{m \ge 1} \subset A^p_\alpha(\B)$ with $f_m\rightarrow 0$ as $m\rightarrow\infty$, uniformly on compact subsets of $\B$,
\begin{eqnarray*}
&&\lim_{j \to \infty}  \sup_{m \ge 1} \bigg(\sum_{i=1}^M \sum_{Q \in \calD^i, K_{c(Q)} \cap K_j=\emptyset} \mu_{u, \varphi, q, \alpha} \left(\widehat{K_{c(Q)}}\right)^{\frac{1}{\gamma'}} \\
&& \quad  \quad  \quad \quad  \quad \quad \quad  \quad  \quad \cdot V^{\frac{1}{\gamma}}_\alpha\left(\widehat{K_{c(Q)}} \right) \langle |f_m|^N \rangle_{\widehat{K_{c(Q)}}} \langle |f_m|^{q-N} \rangle_{\widehat{K_{c(Q)}}, \gamma} \bigg)=0.
\end{eqnarray*}
\end{enumerate}
\end{thm}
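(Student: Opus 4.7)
The plan is to adapt the proof of Theorem \ref{compactness} to the ball, replacing the Carleson boxes $Q_I$ (with $I \in \calD^i$, $i=1,2,3$) by the dyadic tents $\widehat{K_{c(Q)}}$ (with $Q \in \calD^i$, $i = 1, \dots, M$) and the upper rectangles $Q_I^{\textrm{up}}$ by the upper Whitney boxes $K_{c(Q)}$ furnished by Propositions \ref{dyadicgridball} and \ref{Meiball}. All the upper half-plane ingredients transcribe to $\B$: the ball version of Lemma \ref{subharmonic} (mean value over a Bergman ball), the ball version of Lemma \ref{sparse-1} (with $|\bar\zeta-z|^{\alpha+2}$ replaced by $|1-\langle \zeta, z\rangle|^{n+1+\alpha}$), and the ball analogs of Theorems \ref{Carleson}, \ref{vanCarleson}, and \ref{190313cor01}, which are standard and can be extracted from \cite{RTW}.

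For (i) $\Rightarrow$ (ii), assume $W_{u,\varphi}$ is compact. By the ball analog of Theorem \ref{vanCarleson}, $\mu_{u,\varphi,q,\alpha}$ is a vanishing Carleson measure. Set $M := \sup_m \|f_m\|_{A^q_\alpha(\B)}^q$ and, given $\varepsilon > 0$, choose $j_0$ so large that whenever $j \ge j_0$ and $K_{c(Q)} \cap K_j = \emptyset$ we have $\mu_{u,\varphi,q,\alpha}(\widehat{K_{c(Q)}})/V_\alpha(\widehat{K_{c(Q)}}) < (\varepsilon/M)^{\gamma'}$; this is possible because $K_{c(Q)} \cap K_j = \emptyset$ forces $c(Q)$ close to $\partial \B$. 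Factor this small ratio out of every summand, bound the remaining unrestricted sparse sum via the ball analog of the maximal-function argument in Theorem \ref{190313cor01} to obtain $\lesssim \|f_m\|^q_{A^q_\alpha(\B)} \le M$, and take the supremum in $m$.

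For (ii) $\Rightarrow$ (i), it suffices to show $\|W_{u,\varphi} f_m\|_{A^q_\alpha(\B)} \to 0$ whenever $\{f_m\}$ is bounded in $A^q_\alpha(\B)$ and converges to $0$ uniformly on compact subsets of $\B$. Apply the ball version of Lemma \ref{sparse-1} to dominate $\|W_{u,\varphi} f_m\|^q_{A^q_\alpha(\B)}$ by a finite sum ($i = 1, \dots, M$) of sparse forms of the type appearing in (ii). Fix $\varepsilon > 0$ and exhausting sets $\{K_j\}$, and split each sparse sum into $A_{1,m}$ (the part with $K_{c(Q)} \cap K_j = \emptyset$) and $A_{2,m}$ (the rest). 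Hypothesis (ii) directly yields $A_{1,m} < \varepsilon/2$ uniformly in $m$ once $j$ is sufficiently large. For $A_{2,m}$, use that $\mu_{u,\varphi,q,\alpha}$ is a Carleson measure to replace the factor $\mu(\widehat{K_{c(Q)}})^{1/\gamma'} V^{1/\gamma}_\alpha(\widehat{K_{c(Q)}})$ by $V_\alpha(K_{c(Q)})$, then sum pointwise to obtain an integral of the form
$$
\int_{K_j'} \calM(|f_m|^N)(z) \cdot \calM(|f_m|^{(q-N)\gamma})(z)^{1/\gamma} \, dV_\alpha(z)
$$
over a compact set $K_j' \subset \B$ that contains every $K_{c(Q)}$ meeting $K_j$. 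H\"older's inequality with exponents $q/N$ and $q/(q-N)$, together with the $L^{q/N}_\alpha$ and $L^{q/(\gamma(q-N))}_\alpha$ boundedness of the Hardy-Littlewood maximal operator $\calM$ on the space of homogeneous type $(\B, V_\alpha)$ (for which the constraint $1 < \gamma < q/(q-N)$ is exactly what is required), reduces the integral to $\|f_m\|_{L^q_\alpha(K_j')}^q$, which tends to zero with $m$ by the uniform convergence of $f_m$ to $0$ on the compactum $K_j'$. Combining gives $\|W_{u,\varphi} f_m\|^q_{A^q_\alpha(\B)} < \varepsilon$ for $m$ large.

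The main obstacle I foresee is the geometric bookkeeping around the compact set $K_j'$: one must verify that $\bigcup\{K_{c(Q)} : K_{c(Q)} \cap K_j \ne \emptyset\}$ still sits inside a fixed compact subset of $\B$. This uses Proposition \ref{dyadicgridball}(a), which embeds each $K_{c(Q)}$ into a Bergman ball of fixed radius around $c(Q)$, so that a uniform Bergman-neighborhood of $K_j$ suffices. Once this piece is in place, every remaining step transcribes directly from the upper half-plane proof of Theorem \ref{compactness}.
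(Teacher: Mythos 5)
Your proposal is correct and is essentially the proof the paper intends: Section 5 explicitly states that the unit ball results are obtained by transcribing the upper half-plane arguments (here, the proof of Theorem \ref{compactness}) via Propositions \ref{dyadicgridball} and \ref{Meiball}, with dyadic tents $\widehat{K_{c(Q)}}$ and upper Whitney boxes $K_{c(Q)}$ playing the roles of $Q_I$ and $Q_I^{\textrm{up}}$, and the details are left to the reader. Your handling of the compact set $K_j'$ and the order of quantifiers in the $\varepsilon$-argument matches (indeed slightly cleans up) the half-plane proof.
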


Next, we extend the weighted estimates \eqref{20190930eq01} to the unit ball. We first need the following $\mathcal{B}_{u, \varphi}^{\alpha, q}$ weights, which is a unit ball analog of the $\textbf{B}_{u, \varphi}^{\alpha, q}$ weights.

\begin{defn}
Given $\alpha>-1$, $q>1$, $u \in H(\B)$ and $\varphi: \B \to \B$, the weight class $\mathcal{B}_{u, \varphi}^{\alpha, q}$ is defined to be the collection of all weights $\omega$ on $\B$ satisfying
\begin{equation} \label{Bpcondition}
\left[\omega \right]_{\mathcal{B}_{u, \varphi}^{\alpha, q}}:=\sup_{\zeta \in \B} \int_{\B} \frac{|u(z)|^q \omega(z)}{\left| 1-\langle \zeta, \varphi(z)\rangle \right|^{\alpha+n+1}} dV_\alpha(z)<\infty. \nonumber
\end{equation}
\end{defn}

\begin{thm} Let $q>1$, $1<s<q'$, $\alpha>-1$, $u \in H(\B)$, $\varphi: \B  \mapsto \B$ be a holomorphic self-mapping and $\omega^{s'} \in \mathcal{B}_{u, \varphi}^{\alpha, q}$, where $q'$ (respectively, $s'$) is the conjugate of $q$ (respectively, $s$). Let further, $W_{u, \varphi}: A_\alpha^q(\B) \mapsto A_\alpha^q(\B)$ be bounded. Then the following weighted estimate holds.
\begin{equation} \label{20190930eq02}
\int_{\B} |u(z)|^q |f(\varphi(z))|^q \omega(z) dV_\alpha(z) \lesssim \left[\omega^{s'} \right]^{\frac{1}{s'}}_{\mathcal{B}_{u, \varphi}^{\alpha, q}}\|f\|_{A^q_\alpha(\B)}^q,
\end{equation}
where the implicit constant in the above estimate is independent of the choice of $f$ and the weight $w$. In particular,  the measure  $\mu_{u, \varphi, p, w, \alpha}$ is a $1$-Carleson measure. Here
  $\mu_{u, \varphi, p, w, \alpha}$ is defined by
$$
\int_{\B} f d\mu_{u, \varphi, p, w, \alpha}=\int_{\B} |u(z)|^q |f(\varphi(z))|^q \omega(z) dV_\alpha(z), \quad f \ \textrm{is measurable}.
$$
\end{thm}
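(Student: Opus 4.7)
The plan is to mirror, step-by-step, the proof of the half-plane version of this weighted estimate given in the previous section, replacing each ingredient by its ball analog. To start, I invoke the reproducing formula for $A_\alpha^q(\B)$: every $f \in A_\alpha^q(\B)$ satisfies
$$f(w) = \int_\B \frac{f(\zeta)}{(1-\langle w, \zeta\rangle)^{n+1+\alpha}} dV_\alpha(\zeta),$$
and apply this to $f(\varphi(z))$ after factoring $|f(\varphi(z))|^q = |f(\varphi(z))|^{q-1} \cdot |f(\varphi(z))|$. Using the standard estimate $|f(\varphi(z))| \lesssim \int_\B |f(\zeta)| |1-\langle \varphi(z),\zeta\rangle|^{-(n+1+\alpha)} dV_\alpha(\zeta)$ and Fubini, the left-hand side of \eqref{20190930eq02} is dominated by
$$\int_\B |f(\zeta)| \left( \int_\B \frac{|f(\varphi(z))|^{q-1} |u(z)|^q \omega(z)}{|1-\langle \zeta,\varphi(z)\rangle|^{n+1+\alpha}} dV_\alpha(z) \right) dV_\alpha(\zeta).$$

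The next step is to split the inner kernel as in the half-plane proof:
$$\frac{|u(z)|^q\omega(z)}{|1-\langle\zeta,\varphi(z)\rangle|^{n+1+\alpha}} = \frac{|u(z)|^{q/s}}{|1-\langle\zeta,\varphi(z)\rangle|^{(n+1+\alpha)/s}} \cdot \frac{|u(z)|^{q/s'}\omega(z)}{|1-\langle\zeta,\varphi(z)\rangle|^{(n+1+\alpha)/s'}},$$
and apply Hölder's inequality with exponents $s$ and $s'$. The $s'$-factor is controlled directly by $[\omega^{s'}]_{\mathcal{B}_{u,\varphi}^{\alpha,q}}^{1/s'}$ thanks to the weight hypothesis. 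The remaining $s$-factor, after a change of variables, becomes an integral against the pullback measure $\mu_{u,\varphi,q,\alpha}$, to which the ball version of Lemma \ref{sparse-1} applies (with $N=1$, $\gamma=1$, and the Bergman tree dyadic tents $\widehat{K_{c(Q)}}$ from Propositions \ref{dyadicgridball}--\ref{Meiball} playing the role of the $Q_I$'s). This reduces the estimate to controlling a sum over sparse tents of the form
$$\sum_{i=1}^M \sum_{Q \in \calD^i} V_\alpha(\widehat{K_{c(Q)}}) \langle |f|\rangle_{\widehat{K_{c(Q)}}} \langle |f|^{q-1}\rangle_{\widehat{K_{c(Q)}},s}.$$

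To finish, I would pass from the dyadic tents to their upper Whitney boxes (using part (b) of Proposition \ref{dyadicgridball}), which lets me dominate the sum by an integral over $\B$ of a product of two Hardy--Littlewood type maximal functions (adapted to $V_\alpha$):
$$\int_\B \calM(|f|)(z) \cdot \bigl(\calM(|f|^{(q-1)s})(z)\bigr)^{1/s} dV_\alpha(z).$$
A final application of Hölder with exponents $q$ and $q'$, together with the boundedness of $\calM$ on $L_\alpha^q(\B)$ and on $L_\alpha^{q'/s}(\B)$ (here the condition $s<q'$ is exactly what ensures $q'/s>1$), yields $\|f\|_{A^q_\alpha(\B)}^q$, as desired.

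The main obstacle I anticipate is purely bookkeeping rather than conceptual: one has to verify that the Whitney-decomposition machinery of Proposition \ref{dyadicgridball}, plus the covering result of Proposition \ref{Meiball}, really does play the same role on $\B$ that the triple of dyadic grids $\calD^1,\calD^2,\calD^3$ plays on $\R^2_+$, in particular that the $\frac{3}{2}$-dilation argument used to pass between $\frac{3}{2}Q_I$ and a larger cube in $\calD^k$ has a suitable ball replacement (namely, enlarging a Bergman ball by a fixed Bergman-metric factor and absorbing it into the next generation). Once these geometric facts are in place, the analytic steps above go through verbatim, and the assertion that $\mu_{u,\varphi,p,\omega,\alpha}$ is a $1$-Carleson measure follows from the estimate \eqref{20190930eq02} by the standard test-function argument from \cite[Theorem 2.25]{Zhu}, exactly as in the half-plane case.
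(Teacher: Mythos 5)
Your proposal is correct and is essentially the paper's own argument: the paper proves this theorem by transporting the half-plane proof of \eqref{20190930eq01} verbatim to the ball, using the reproducing formula on $\B$, the H\"older split with exponents $s$ and $s'$ against the $\mathcal{B}_{u,\varphi}^{\alpha,q}$ condition, the ball version of Lemma \ref{sparse-1} built from Propositions \ref{dyadicgridball}--\ref{Meiball} (which is exactly \cite[Lemma 5]{RTW}), and the final maximal-function estimate with exponents $q$ and $q'/s$. The geometric point you flag is precisely what Proposition \ref{Meiball} supplies, so no gap remains.
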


%----------------------------------------------------------------------------------
\section{Further remarks}
We conclude the article with several remarks. Our main results, and the proofs, are a model case for a wider range of results in studying complex function theory and weighted estimates via sparse domination. Some  possible extensions to the main results of this paper are as follows.

\begin{enumerate}
\item [$(a)$] Establish the results for more general domains. That is, find the sparse bounds for weighted composition operators acting between weighted Bergman spaces on polydics, Hartogs domains, Thullen domain and etc.

\smallskip

\item [$(b)$] Study the sparse bounds and corresponding weighted estimates of  weighted composition operators acting on some M\"obius invariant function spaces. Typical examples of these spaces include the Bloch space  $\calB$, $\mathcal{Q}_p$ and  $\mathcal{Q}_K$ spaces (see, e.g., \cite{WZ}).

\smallskip

\item [$(c)$] Introduce more general weighted estimates, for example, weighted estimates with matrix weights. This would  encounter extra difficulties, for example, one needs to figure out a correct notion of convex body domination in the setting of complex function spaces.
\end{enumerate}

\textbf{Acknowledgements.} The second author was partially supported by the  Science and Technology Development
Fund, Macau SAR (File no. 186/2017/A3).   BDW's research supported in part by a National Science Foundation DMS grants \# 1560955 and \# 1800057 and Australian
 Research Council -- DP 190100970.

\end{document}